\newtheorem{Th}{Theorem}
\newtheorem{Prop}[Th]{Proposition}
\newtheorem{Lm}[Th]{Lemma}
\newtheorem{Co}[Th]{Corollary}
\theoremstyle{definition}
\newtheorem{Def}[Th]{Definition}
\author{Dudko A.}
\title{Characters on the Full Group of the Odometer.}
\date{}
\begin{document}
\maketitle
\begin{abstract} Let $X$ be the space of all infinite $0,1$-sequences
and $T$ be the odometer on $X$. In this paper we introduce a dense
subgroup $S(2^\infty)$ of the full group $[T]$ and describe all
indecomposable characters on $S(2^\infty)$. As result we obtain a
description of indecomposable characters on $[T]$.
\end{abstract}
\section{Introduction.}
Let $X=\prod_1^\infty\{0,1\}$ be the space of all infinite
$0,1$-sequences with the product topology and $T$ be the odometer on
$X$: $Tx=x+\underline{1}$. By definition, the {\it full group} of
the automorphism $T$ is the group $[T]$ of all Borel automorphisms
$S$ of $X$, such that $Sx\subset O_T(x)$ for all $x\in X$, where
$O_T(x)$ is the orbit of $x$. Let $\mu=\nu^{\otimes\infty}$ be the
standard product measure on $X$, where $\nu(\{0\})=\nu(\{1\})=1/2$.
The group $[T]$ is a topological group with the {\it uniform}
topology, given by the norm $\left\|S_1-S_2\right\|=\mu(\{x:S_1x\neq
S_2x\})$. In this paper we obtain a description of all indecomposble
characters on $[T]$.

 Let $X_n$ be the set of all $0,1$-sequences of length $n$. Denote
$S(2^n)$ the group of all permutations on $X_n$. Elements of
$S(2^n)$ are arbitrary bijections $X_n\rightarrow X_n$. The group
$S(2^n)$ acts naturally on $X$:$$s\in S(2^n):X\rightarrow X,\;\;
s((x,a))=(s(x),a)\;\;\text{for any}\;\;x\in X_n,a\in X.$$ Denote
$S(2^\infty)=\cup_{n\in \mathbb{N}} S(2^n)$. Then $S(2^\infty)$ is a
dense subgroup in $[T]$. It follows, that for each continuous factor
representation $\pi$ of $[T]$ the restriction of $\pi$ onto
$S(2^\infty)$ generates the same $W^\ast$ algebra, therefore, also
is a factor representation. In this paper we describe all
indecomposable characters on $S(2^\infty)$. It turns out, that
indecomposable characters on $S(2^\infty)$ have very simple
structure and each indecomposable character on $S(2^\infty)$ gives
rise to an indecomposable character on $[T]$.

The group $S(2^\infty)$ is a {\it parabolic} analog of the infinite
symmetric group $S(\infty)$. Another parabolic analog of $S(\infty)$
is the group $R$ of {\it rational rearrangements of the segment}
(see \cite{Gor}). In \cite{Gor} E. Goryachko studied $K_0$-functor
and characters of the group $R$. Indecomposable characters on the
infinite symmetric group were described by E. Thoma in \cite{Thoma}.
In \cite{VK0} and \cite{VK1} A. Vershik and S. Kerov developed the
asymptotic theory of characters on $S(\infty)$. In \cite{O1} and
\cite{O2} G. Olshanski developed the semigroup approach to
representations of groups, connected to $S(\infty)$. Using the
semigroup approach, A. Okounkov found a new proof of the Thoma's
result (see \cite{Ok2}). In this paper we use the approach of
Olshanski and Okounkov.

The author is grateful to Nessonov N.I. for the statement of the
problem and useful discussions.

Now we remind some definitions from the representation theory.
\begin{Def}
A character on a group $G$ is a function $\chi:G\rightarrow
\mathbb{C}$, satisfying to the following properties:
\begin{itemize}
\item[1)] $\chi(g_1g_2)=\chi(g_2g_1)$ for any $g_1,g_2\in G$;
\item[2)] the matrix
$\left\{\chi\left(g_ig_j^{-1}\right)\right\}_{i,j=1}^n$ is
nonnegatively defined for any $n$ and $g_1,\ldots,g_n\in G$;
\item[3)] $\chi(e)=1$.
\end{itemize} A character $\chi$ is called indecomposable, if it
can't be represented in the form $\chi=\alpha
\chi_1+(1-\alpha)\chi_2$, where $0<\alpha<1$ and $\chi_1,\chi_2$ are
distinct characters.
\end{Def}
For a unitary representation $\pi$ of a group $G$ denote
$\mathcal{M}_\pi$ the $W^{*}$-algebra, generated by the operators of
the representation $\pi$. By definition, the commutant $S'$ of a set
$S$ of operators in a Hilbert space $\mathcal{H}$ is the algebra
$S'=\{A\in B(\mathcal{H}):AB=BA\text{ for any }B\in S\}$.
\begin{Def} A representation $\pi$ of a group $G$ is called a {\it
factor representation}, if the algebra $\mathcal{M}_\pi$ is a
factor, that is $\mathcal{M}_\pi \cap \mathcal{M}_\pi'=\mathbb{C}I$.
\end{Def} The indecomposable characters on a group $G$ are in one to
one correspondence with the {\it finite type} factor representations
of $G$. Namely, starting with an indecomposable character $\chi$ on
$G$ one can construct a triple
$\left(\pi_\chi,\mathcal{H}_\chi,\xi_\chi\right)$, called the
Gelfand-Naimark-Siegal construction. Here $\pi_\chi$ is a finite
type factor representation, acting in the space $\mathcal{H}_\chi$,
and $\xi_\chi$ is a unit vector in $\mathcal{H}_\chi$, such that
$\chi(g)=(\pi_\chi(g)\xi_\chi,\xi_\chi)$ for any $g\in G$. Note,
that the vector $\xi_\chi$ is cyclic and separating for the algebra
$\mathcal{M}_\pi$.

For $n\in \mathbb{N}$ denote the inclusion
$\mathfrak{i}_n:S(2^\infty)\hookrightarrow S(2^\infty)$ as follows:
\begin{eqnarray} \mathfrak{i}_n(s)((x,y))=(x,s(y)),\;\;\text{for
any}\;\;x\in X_n,y\in X.
\end{eqnarray}
Put $S_n(2^\infty)=\mathfrak{i}_n(S(2^\infty))$. Note, that for any
$s_1\in S(2^n), s_2\in S_n(2^\infty)$ one has $s_1s_2=s_2s_1$. The
following property is known as {\it multiplicativity}.
\begin{Prop}\label{intro mult} A character $\chi$ on $S(2^\infty)$ is indecomposable
iff $\chi(s_1s_2)=\chi(s_1)\chi(s_2)$ for any $n\in\mathbb{N}$ and
$s_1\in S(2^n),s_2\in S_n(2^\infty)$.
\end{Prop}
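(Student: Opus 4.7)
The plan is to route through the GNS triple $(\pi_\chi,\mathcal{H}_\chi,\xi_\chi)$ and the standard equivalence that $\chi$ is indecomposable iff $\mathcal{M}_\chi=\pi_\chi(S(2^\infty))''$ is a factor. In that case $\mathcal{M}_\chi$ is finite and carries a unique normal faithful tracial state $\tau(\cdot)=(\cdot\,\xi_\chi,\xi_\chi)$ with $\tau(\pi_\chi(g))=\chi(g)$.

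For $(\Rightarrow)$, set $\mathcal{N}_n:=\pi_\chi(S_n(2^\infty))''$. Since elements of $S(2^n)$ commute with those of $S_n(2^\infty)$, $\pi_\chi(s_1)\in\mathcal{N}_n'\cap\mathcal{M}_\chi$ for every $s_1\in S(2^n)$. Let $E_n\colon\mathcal{M}_\chi\to\mathcal{N}_n$ be the $\tau$-preserving conditional expectation; the identities $E_n(a)b=E_n(ab)=E_n(ba)=bE_n(a)$ for $a\in\mathcal{N}_n'\cap\mathcal{M}_\chi$ and $b\in\mathcal{N}_n$ show that $E_n$ sends $\mathcal{N}_n'\cap\mathcal{M}_\chi$ into the center $\mathcal{N}_n\cap\mathcal{N}_n'$. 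The pivotal claim is then that $\mathcal{N}_n$ is itself a factor. Granting this, $E_n(\pi_\chi(s_1))=\tau(\pi_\chi(s_1))I=\chi(s_1)I$, and for every $s_2\in S_n(2^\infty)$
\[
\chi(s_1s_2)=\tau\bigl(\pi_\chi(s_1)\pi_\chi(s_2)\bigr)=\tau\bigl(E_n(\pi_\chi(s_1))\pi_\chi(s_2)\bigr)=\chi(s_1)\chi(s_2).
\]
Proving $\mathcal{N}_n$ is a factor is the principal technical step: any $z\in\mathcal{N}_n\cap\mathcal{N}_n'$ automatically commutes with $\pi_\chi(S(2^n))\subset\mathcal{N}_n'$ (because $z\in\mathcal{N}_n$) and with $\pi_\chi(S_n(2^\infty))$ (because $z\in\mathcal{N}_n'$), and this must be promoted to commutation with $\pi_\chi(S(2^m))$ for every $m>n$. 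I would approach this by repeatedly conjugating $z$ by elements of $\pi_\chi(S(2^n))$ and $\pi_\chi(S_n(2^\infty))$, exploiting that such conjugations preserve $\mathcal{N}_n$ together with its center, and reducing at the end to $z\in\mathcal{M}_\chi\cap\mathcal{M}_\chi'=\mathbb{C}I$.

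For $(\Leftarrow)$, assume multiplicativity and write $\chi=\int_\Omega\chi_\omega\,d\mu(\omega)$ as a Choquet integral over the indecomposable characters; by $(\Rightarrow)$ each $\chi_\omega$ is multiplicative. Using the structural invariance $\chi_\omega\circ\mathfrak{i}_n=\chi_\omega$ of indecomposable characters (which I expect to follow from their explicit classification, established elsewhere in the paper) together with $\chi_\omega(g^{-1})=\overline{\chi_\omega(g)}$ and the multiplicativity of both $\chi_\omega$ and $\chi$, I obtain
\[
\int_\Omega|\chi_\omega(s_1)|^2\,d\mu(\omega)=\chi\bigl(s_1\cdot\mathfrak{i}_n(s_1^{-1})\bigr)=\chi(s_1)\overline{\chi(s_1)}=\Bigl|\int_\Omega\chi_\omega(s_1)\,d\mu(\omega)\Bigr|^2
\]
for every $n$ and $s_1\in S(2^n)$. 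Equality in Cauchy--Schwarz then forces $\chi_\omega(s_1)$ to be $\mu$-almost-everywhere constant and equal to $\chi(s_1)$; letting $n$ and $s_1$ vary gives $\chi_\omega=\chi$ $\mu$-a.e., so $\mu$ is a point mass and $\chi$ is indecomposable. The two delicate ingredients --- factoriality of $\mathcal{N}_n$ and the $\mathfrak{i}_n$-invariance of indecomposable characters --- both rest on the specific inductive structure of $S(2^\infty)$ rather than on purely abstract operator-algebraic facts.
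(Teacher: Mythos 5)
The paper itself gives no written proof here (it defers to Okounkov's argument for $S(\infty)$), so your proposal must be judged on its own; and it has a genuine gap in the forward direction. Your reduction via the $\tau$-preserving conditional expectation $E_n$ onto $\mathcal{N}_n=\pi_\chi(S_n(2^\infty))''$ is correct as far as it goes, but it makes everything rest on the claim that $\mathcal{N}_n$ is a factor, and the argument you sketch for that claim is vacuous: an element $z\in\mathcal{N}_n\cap\mathcal{N}_n'$ already commutes with $\pi_\chi(S(2^n))$ and with $\pi_\chi(S_n(2^\infty))$, so conjugating $z$ by such elements simply returns $z$ and can never produce commutation with $\pi_\chi(S(2^m))$ for $m>n$ --- and $S(2^n)\times S_n(2^\infty)$ is a \emph{proper} subgroup of $S(2^m)$, since generic permutations of $X_m=X_n\times X_{m-n}$ mix the two blocks. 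Factoriality of $\mathcal{N}_n$ is equivalent to indecomposability of the restricted character $\chi\circ\mathfrak{i}_n$, which is not easier than the proposition being proved; a posteriori it follows from the classification theorem, but that theorem is derived \emph{from} this proposition, so one cannot use it. The standard route (the one the paper points to) avoids this entirely: for $s_1\in S(2^n)$ the elements $\mathfrak{i}_{kn}(s_1)$ are conjugate to $s_1$ by block-swap permutations that commute with any fixed $s_2\in S_n(2^\infty)$, they pairwise commute, and any weak cluster point of their Ces\`aro averages commutes with all of $\pi_\chi(S(2^\infty))$, hence lies in $\mathcal{Z}(\mathcal{M}_\chi)=\mathbb{C}I$ and equals $\chi(s_1)I$ by traciality; inserting this into $\tau(\pi_\chi(s_1)\pi_\chi(s_2))$ gives multiplicativity without ever needing $\mathcal{N}_n$ to be a factor.

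Your converse direction is essentially sound (decompose $\chi=\int\chi_\omega\,d\mu$, use multiplicativity of the extremes and equality in Cauchy--Schwarz to force $\chi_\omega=\chi$ a.e.), but the ingredient $\chi_\omega\circ\mathfrak{i}_n=\chi_\omega$ must not be taken from the classification --- that is circular for the same reason as above. Fortunately it is elementary: if $s\in S(2^m)$, then both $s$ and $\mathfrak{i}_n(s)$, viewed as permutations of $X_{n+m}$, have the same cycle type (each cycle of $s$ repeated $2^n$ times), so they are conjugate in $S(2^{n+m})$, and hence $\chi(\mathfrak{i}_n(s))=\chi(s)$ for \emph{every} central function $\chi$. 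With that substitution your $(\Leftarrow)$ argument works; the $(\Rightarrow)$ direction, however, needs to be replaced by (or completed to) an argument of the asymptotic-conjugates type, since the pivotal factoriality claim is left unproved and your proposed mechanism for it cannot succeed.
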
 The last proposition can be proven the same way, as
the analogous statement for the indecomposable characters on the
infinite symmetric group (see \cite{Ok2}).
 For $f\in [T]$ denote
$Fix(f)=\{x\in X:f(x)=x\}$. The main results of this paper are the
following two propositions:
\begin{Th}\label{intro main}
A function $\chi$ on $S(2^\infty)$ is an indecomposable character,
if and only if there exists $\alpha\in \mathbb{Z}_+\cup \{\infty\}$,
such that $\chi(s)=\mu(Fix(s))^\alpha$ for any $s\in S(2^\infty)$.
\end{Th}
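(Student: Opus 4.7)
I prove both directions of the equivalence.

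\emph{Sufficiency.} For $\alpha = 0$ the trivial character works, and for $\alpha = \infty$ the regular character of $S(2^\infty)$ realizes $\mu(Fix(\cdot))^\infty$ (since $\mu(Fix(s)) < 1$ for every $s \ne e$, this function is $\mathbf{1}_{\{s = e\}}$, which is the character of the left regular representation on $\ell^2(S(2^\infty))$). For $\alpha \in \mathbb{N}$, I realize $\chi_\alpha$ as the canonical trace on the groupoid von Neumann algebra $L\mathcal{R}_\alpha$, where $\mathcal{R}_\alpha$ is the orbit equivalence relation generated by the diagonal $S(2^\infty)$-action on $(X^\alpha, \mu^{\otimes\alpha})$: the trace of the image of $s$ equals $\mu^{\otimes\alpha}(Fix_{X^\alpha}(s)) = \mu(Fix(s))^\alpha$, and the diagonal action is ergodic (by $k$-transitivity of $S(2^n)$ on $X_n$ for $n$ large relative to any fixed $k$), so $L\mathcal{R}_\alpha$ is a finite factor and the associated character is indecomposable.

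\emph{Necessity: dichotomy at $\sigma$.} Let $\chi$ be an indecomposable character and $\sigma \in S(2)$ the bit-flip. In $S(2^2)$, the elements $\sigma$, $\mathfrak{i}_1(\sigma)$, and $\sigma \mathfrak{i}_1(\sigma)$ all have cycle type $(2,2)$ as permutations of $X_2$, so are conjugate in $S(2^2) \subset S(2^\infty)$. Centrality of $\chi$ and multiplicativity (Proposition~\ref{intro mult}) applied to the commuting pair $\sigma, \mathfrak{i}_1(\sigma)$ together give
\[
\chi(\sigma) = \chi(\sigma \mathfrak{i}_1(\sigma)) = \chi(\sigma)\chi(\mathfrak{i}_1(\sigma)) = \chi(\sigma)^2,
\]
so $\chi(\sigma) \in \{0, 1\}$. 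If $\chi(\sigma) = 1$, then $\pi_\chi(\sigma)\xi_\chi = \xi_\chi$, and since $\xi_\chi$ is separating, $\pi_\chi(\sigma) = I$. Centrality propagates this to every conjugate of $\sigma$, and for $n \ge 3$ simplicity of $A_{2^n}$ (together with $\sigma \in A_{2^n}$) forces $\pi_\chi|_{S(2^n)}$ to factor through $S_{2^n}/A_{2^n} \cong \mathbb{Z}_2$; comparing with the analogous conclusion for $S(2^{n+1})$, in which every odd element of $S(2^n)$ becomes even, kills the residual $\mathbb{Z}_2$. Hence $\chi \equiv 1$, i.e.\ $\alpha = 0$.

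\emph{Necessity: the case $\chi(\sigma) = 0$.} Fix the transposition $\tau = (00, 01) \in S(2^2)$, of cycle type $(2,1,1)$ with $\mu(Fix(\tau)) = 1/2$, and set $\alpha = -\log_2 \chi(\tau) \in [1, \infty]$ (with $\alpha = \infty$ when $\chi(\tau) = 0$). I then show by induction on $n$ that $\chi(s) = \mu(Fix(s))^\alpha$ for every $s \in S(2^n)$. The inductive step combines three ingredients: (i) multiplicativity across the commuting pair $(S(2^n), S_n(2^\infty))$, which localizes the analysis; (ii) the observation that $\chi \circ \mathfrak{i}_n$ is again an indecomposable character of $S(2^\infty)$, necessarily of the same parameter $\alpha$ by a compatibility check at low-level transpositions, so that $\chi(\mathfrak{i}_n(s))$ is already known; and (iii) cycle-type analysis realizing a general $s \in S(2^n)$, up to conjugation in $S(2^\infty)$, as a product of commuting building blocks living in nested copies $S(2^k) \cdot S_k(2^\infty)$, on which (i) and (ii) determine $\chi$.

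\emph{Main obstacle.} The principal difficulty is the integrality of $\alpha$: showing that the a priori real exponent $-\log_2 \chi(\tau) \in [1, \infty]$ must actually lie in $\mathbb{N} \cup \{\infty\}$ (equivalently, $\chi(\tau) \in \{2^{-k} : k \in \mathbb{N}\} \cup \{0\}$). This discreteness --- in sharp contrast to the continuous Thoma simplex parametrizing characters of $S(\infty)$ --- reflects the tree structure on $X$, and I expect to establish it following the semigroup approach of \cite{O1, O2, Ok2}: extending $\chi$ to the semigroup of partial permutations of $X$ compatible with the binary tree, and using positive-definiteness of this extension (admissibility of the associated semigroup representation) to force $\chi$ into the tensor-power form produced in the sufficiency direction.
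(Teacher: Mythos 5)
Your proposal establishes only the easy fragments of the theorem and leaves both genuinely hard points of the necessity direction unproved. First, the heart of the matter is showing that an indecomposable character satisfies $\chi(s)=\mu(Fix(s))^\alpha$ for a \emph{single} exponent $\alpha$ and \emph{all} $s\in S(2^\infty)$. Your step (iii) ("cycle-type analysis realizing a general $s$ ... as a product of commuting building blocks living in nested copies") is not an argument, and as stated it would fail: an element with odd cycles, e.g.\ a $3$-cycle in $S(2^2)$, is not conjugate to a product of commuting elements of the form $s_1\mathfrak{i}_k(s_2)$ to which multiplicativity applies, so knowing $\chi$ on transpositions and on $\chi\circ\mathfrak{i}_n$ does not determine $\chi(s)$. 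The paper has to work considerably harder here: it builds the asymptotic projections $P^A=w\text{-}\lim\pi(s^A_m)$ (Lemma~\ref{asymp proj}), proves $tr(P^A)=\mu(A)^\alpha$ (Proposition~\ref{asymp properties}, Corollary~\ref{asymp alpha}), shows $\pi(s)P^A=P^A$ when $A\subset Fix(s)$ (Proposition~\ref{asymp fixproj}), handles elements with only $1$- and $2$-cycles directly, and then reaches general $s$ by an orthogonality/averaging estimate over many conjugates $s_1,\dots,s_k$ of $s$ whose pairwise quotients $s_is_j^{-1}$ have only $1$- and $2$-cycles; the existence of such conjugates is itself a nontrivial combinatorial statement (Proposition~\ref{append si}). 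None of this, or any substitute for it, appears in your outline. In addition, your assertion that $\alpha=-\log_2\chi(\tau)\in[1,\infty]$ presupposes $0\leqslant\chi(\tau)\leqslant 1/2$, which is not justified a priori.

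Second, you explicitly defer the integrality of $\alpha$ ("I expect to establish it following the semigroup approach"), but this is precisely the other hard ingredient of the theorem: without it you only get $\alpha\in\mathbb{R}_+\cup\{\infty\}$, not $\alpha\in\mathbb{Z}_+\cup\{\infty\}$. The paper proves it by evaluating the trace on the alternating projection $Alt(n)=\frac{1}{2^n!}\sum_{s\in S(2^n)}\sigma(s)\pi_\alpha(s)$ and showing, via a Stirling-number computation, that the resulting quantity $C_\alpha(m)$ becomes negative for some $m$ whenever $\alpha$ is a positive noninteger, contradicting positivity; an admissibility/semigroup argument is not supplied in your text and is not a routine fill-in. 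On the positive side, your sufficiency construction (groupoid algebra of the diagonal action on $X^\alpha$) is essentially the paper's Section~2 construction in different clothing, though indecomposability should be justified by the multiplicativity criterion of Proposition~\ref{intro mult} (as the paper does) rather than by factoriality of the ambient algebra $L\mathcal{R}_\alpha$, since the group need not generate it; and your dichotomy $\chi(\sigma)\in\{0,1\}$ with the $A_{2^n}$-simplicity argument for the case $\chi(\sigma)=1$ is correct, but it covers only the case $\alpha=0$.
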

In the last theorem  we assume $0^0=1,x^\infty=0$ for any $x\in
[0,1)$ and $1^\infty=1$. Note, that $\alpha=0$ and $\infty$
correspond to the trivial and the regular characters.
\begin{Co} A function $\chi$ on $[T]$ is an indecomposable character,
if and only if there exists $\alpha\in \mathbb{Z}_+\cup \{\infty\}$,
such that $\chi(f)=\mu(Fix(f))^\alpha$ for any $f\in [T]$.
\end{Co}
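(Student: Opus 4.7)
The plan is to lift the classification of Theorem \ref{intro main} from $S(2^\infty)$ to all of $[T]$ using the density of $S(2^\infty)$ in the uniform topology. The only topological input needed is that $f\mapsto\mu(Fix(f))^\alpha$ is continuous on $([T],\|\cdot\|)$ for every $\alpha\in\mathbb{Z}_+\cup\{\infty\}$; this follows from the inclusion $Fix(f_1)\triangle Fix(f_2)\subset\{x:f_1x\neq f_2x\}$, which yields $|\mu(Fix(f_1))-\mu(Fix(f_2))|\leq \|f_1-f_2\|$, together with continuity of $t\mapsto t^\alpha$ on $[0,1]$ under the conventions of Theorem \ref{intro main}.

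For the ``if'' direction, set $\widetilde\chi_\alpha(f):=\mu(Fix(f))^\alpha$ on $[T]$. Its restriction to $S(2^\infty)$ is an indecomposable character by Theorem \ref{intro main}, so centrality, positive definiteness and the normalization $\chi(e)=1$ hold on the dense subgroup and therefore, by continuity of $\widetilde\chi_\alpha$, on all of $[T]$. For indecomposability, a splitting $\widetilde\chi_\alpha=t\psi_1+(1-t)\psi_2$ with continuous characters $\psi_i$ on $[T]$ and $t\in(0,1)$ restricts to a splitting on $S(2^\infty)$; indecomposability of $\widetilde\chi_\alpha|_{S(2^\infty)}$ then forces $\psi_i|_{S(2^\infty)}=\widetilde\chi_\alpha|_{S(2^\infty)}$, and density plus continuity give $\psi_i=\widetilde\chi_\alpha$ on $[T]$.

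Conversely, let $\chi$ be an indecomposable character on $[T]$ and let $(\pi_\chi,\mathcal{H}_\chi,\xi_\chi)$ be its GNS triple, which is a finite type factor representation with cyclic and separating vector. By the observation in the introduction, $\pi_\chi(S(2^\infty))$ generates the same $W^*$-algebra $\mathcal{M}_{\pi_\chi}$, so the restriction to $S(2^\infty)$ is again a finite type factor representation with the same cyclic separating vector; hence $\chi|_{S(2^\infty)}$ is an indecomposable character on $S(2^\infty)$. Theorem \ref{intro main} then produces an $\alpha\in\mathbb{Z}_+\cup\{\infty\}$ with $\chi|_{S(2^\infty)}=\widetilde\chi_\alpha|_{S(2^\infty)}$, and the continuity of both sides combined with density yields $\chi=\widetilde\chi_\alpha$ on $[T]$.

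The only real obstacle is the introduction's claim that $\pi_\chi(S(2^\infty))''=\mathcal{M}_{\pi_\chi}$. I would justify this by first establishing weak-operator continuity of $\pi_\chi$ in the uniform topology of $[T]$: the identity $\|\pi_\chi(f)\xi_\chi-\xi_\chi\|^2=2-2\operatorname{Re}\chi(f)$, together with continuity of $\chi$ and cyclicity of $\xi_\chi$, gives strong continuity of $\pi_\chi$ on the dense set $\pi_\chi([T])\xi_\chi$ by a standard $\varepsilon/3$ argument, and then on all of $\mathcal{H}_\chi$; Kaplansky's density theorem then promotes density of $S(2^\infty)$ in $[T]$ to equality of the generated $W^*$-algebras. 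Everything else in the corollary is a routine density/continuity transfer from Theorem \ref{intro main}.
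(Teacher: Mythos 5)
Your route is essentially the one the paper intends: the paper gives no separate proof of this corollary, relying on the introduction's density remark (a continuous factor representation of $[T]$ restricts to a representation of $S(2^\infty)$ generating the same $W^*$-algebra) together with Theorem \ref{intro main}; for the ``if'' direction the paper instead realizes $\mu(Fix(\cdot))^k$ directly as characters of $[T]$ via the explicit $II_1$ representations of Section 2, whereas you recover positive definiteness by continuous extension from $S(2^\infty)$. For finite $\alpha$ your Lipschitz bound $|\mu(Fix(f_1))-\mu(Fix(f_2))|\leqslant\|f_1-f_2\|$ and the strong continuity of the GNS representation are correct and the transfer works.

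The genuine gap is the case $\alpha=\infty$, and with it your use of ``continuity of $\chi$'' in the converse direction. Under the paper's conventions $t^\infty=0$ for $t\in[0,1)$ and $1^\infty=1$, so $t\mapsto t^\alpha$ is \emph{not} continuous at $t=1$ when $\alpha=\infty$, and $\widetilde\chi_\infty$ (the regular character) is not uniformly continuous on $[T]$: taking $f_n\in S(2^n)$ a transposition of two level-$n$ cylinders gives $\|f_n-e\|\to 0$ while $\widetilde\chi_\infty(f_n)=0\not\to 1$. Hence your ``if'' argument cannot carry positive definiteness and indecomposability over to $[T]$ by continuity for this value; a separate argument is needed (it is the character of the left regular representation of $[T]$ modulo null sets, indecomposable because nontrivial conjugacy classes are infinite). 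More seriously, in the ``only if'' direction you invoke continuity of $\chi$ to obtain $\pi_\chi(S(2^\infty))''=\mathcal{M}_{\pi_\chi}$, but the corollary assumes only that $\chi$ is an indecomposable character, and such characters need not be continuous --- the $\alpha=\infty$ character on your own list is discontinuous. So as written your argument classifies only the continuous indecomposable characters (for which, by your density computation, the restriction can never be the regular character), and it leaves unaddressed the possibility of discontinuous indecomposable characters on $[T]$; this is exactly the part that a routine density/continuity transfer does not give. A smaller point: in the ``if'' direction you restrict to decompositions $\widetilde\chi_\alpha=t\psi_1+(1-t)\psi_2$ with \emph{continuous} $\psi_i$ without justification; this can be repaired via $t(1-\operatorname{Re}\psi_1)\leqslant 1-\operatorname{Re}\widetilde\chi_\alpha$ and the standard estimate $|\psi(g)-\psi(h)|^2\leqslant 2\left(1-\operatorname{Re}\psi(g^{-1}h)\right)$, but it should be said.
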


\section{Construction of representations.}
In this section we give a construction of $II_1$
factor-representations of $[T]$. Denote
\begin{eqnarray}Y_n=\{(x,y)\in X\times X:x_k=y_k\;\;\text{for
all}\;\;k>n\},\;\; Y=\cup \,Y_n.\end{eqnarray} For $a,b\in X_n$
introduce cylindrical set \begin{eqnarray}Y_n^{a,b}=\{(x,y)\in
Y_n:x_i=a_i,y_i=b_i\;\;\text{for all}\;\;i\leqslant
n\}.\end{eqnarray} Introduce the measure $\gamma$ on $Y$ by the
formula
\begin{eqnarray}
\gamma\left(Y_n^{a,b}\right)=2^{-n}\;\;\text{for each}\;\;n\in
\mathbb{N}\;\;\text{and}\;\;a,b\in X_n.
\end{eqnarray} Denote the unitary
representation $\pi$ of $[T]$ in $\mathcal{H}=L^2(Y,\gamma)$ by the
formula
\begin{eqnarray}
(\pi(s)f)(x,y)=f(s^{-1}(x),y)\;\;\text{for each}\;\;f\in
\mathcal{H},s\in [T].
\end{eqnarray}
Put $\xi(x,y)=\delta_{x,y}$ and $\chi(s)=(\pi(s)\xi,\xi),s\in [T]$.
Then direct calculations show, that $\chi(s)=\mu(Fix(s))$ for all
$s\in [T]$. In particular, $\chi$ is a central function on $[T]$. It
follows, that $\chi$ is a character. By the proposition \ref{intro
mult}, $\chi$ is an indecomposable character on $S(2^\infty)$. It
follows, that $\chi$ is an indecomposable characters on $[T]$.
Moreover, for any $k\in \mathbb{N}$, considering the triple
$\left(\pi^{\otimes k},\mathcal{H}^{\otimes k},\xi^{\otimes
k}\right)$, we get, that $\chi^k$ is an indecomposable character.

\section{System of orthogonal projections.} Let $\chi$ be an indecomposable
character on the group $S(2^\infty)$. Denote $(\pi,\mathcal{H},\xi)$
the corresponding $GNS$-construction. In this section we find a
system of orthogonal projections in the algebra $\mathcal{M}_\pi$,
satisfying remarkable properties.

First we describe the conjugate classes in $S(2^\infty)$. Let
$g_1,g_2\in S(2^\infty)$. Then there exists $n$, such that
$g_1,g_2\in S(2^n)$. The elements $g_1$ and $g_2$ are conjugate in
$S(2^\infty)$, if and only if $g_1$ and $g_2$ are conjugate in
$S(2^n)$. Remind, that conjugate classes in finite symmetric groups
are parameterized by partitions, made from the lengths of the
cycles.

%We call a cylinder of depth $n$ in $X$ a set of the form
%$$X_n^a=\{(a_1,a_2,\ldots,a_n,x_1,x_2,\ldots):x_i\in \{0,1\}\},$$ where $a=(a_1,\ldots,a_n)$ is
%fixed.
%Let $A\subset X$ be a finite union of cylinders.
For subsets $A\subset X_n,B\subset X$ denote $$A\times
B=\{(a_1,\ldots,a_n,b_1,b_2,\ldots):(a_1,\ldots,a_n)\in
A,(b_1,b_2,\ldots)\in B\}\subset X.$$ We will call a subset
$A\subset X$ {\it nice}, if $A=C\times X$ for some $k\in\mathbb{N}$
and $C\subset X_k$. Let $A$ be nice and $m>k$. Denote $s^A_m\in
S(2^\infty)$ as follows:
\begin{eqnarray}\label{asymp snm}
s^A_m(x)=\left\{\begin{array}{ll}x,&\text{if } x\in
A,\\(x_1,\ldots,x_{m-1},1-x_m,x_{m+1},\ldots),&\text{if } x\notin A.
\end{array}\right..
\end{eqnarray}
 Note, that $s^A_m$ affects only $m$-th coordinate of an element of $X$.
 \begin{Lm}\label{asymp proj} For any nice $A\subset X$ there exists the weak limit
 $P^A=w-\lim\limits_{m\rightarrow\infty}
 \pi\left(s^A_m\right)$. The operators $P^A$ are
 orthogonal projections.
 \end{Lm}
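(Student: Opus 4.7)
The plan is to verify existence of the weak limit by checking convergence of matrix coefficients on the dense subset $\{\pi(g)\xi : g \in S(2^\infty)\}$. Write $A = C \times X$ with $C \subset X_k$, and take $g_1, g_2 \in S(2^n)$ with $n \ge k$. For $m, m' > n$, let $\tau \in S(2^\infty)$ be the permutation swapping the $m$-th and $m'$-th coordinates of $X$; then $\tau$ commutes with $g_1, g_2$ (which act only on the first $n$ coordinates), and $\tau s^A_m \tau^{-1} = s^A_{m'}$ because $A$ depends only on the first $k < m, m'$ coordinates. Centrality of $\chi$ then yields $\chi(g_2^{-1} s^A_m g_1) = \chi(g_2^{-1} s^A_{m'} g_1)$, so the matrix coefficient $(\pi(s^A_m)\pi(g_1)\xi, \pi(g_2)\xi) = \chi(g_2^{-1} s^A_m g_1)$ is eventually constant in $m$. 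Combined with $\|\pi(s^A_m)\| = 1$ this gives the weak limit $P^A$, which is self-adjoint because each $\pi(s^A_m)$ is an involution.

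For idempotence, set $h = g_2^{-1}g_1 \in S(2^n)$ and $B = g_1^{-1}(A) = C' \times X$ with $C' \subset X_n$; a direct check gives $g_1^{-1} s^A_m g_1 = s^B_m$. So $(P^A\eta_1, \eta_2) = \chi(h\, s^B_m)$ for $m$ large, while
\[
((P^A)^2\eta_1, \eta_2) = (P^A\eta_1, P^A\eta_2) = \lim_m \lim_{m'} \chi(h\, s^B_{m'} s^B_m),
\]
which (for $m \ne m'$ both large) stabilizes to a single value by the same $\tau$-conjugation argument. Hence $(P^A)^2 = P^A$ reduces to the identity $\chi(h\, s^B_m s^B_{m'}) = \chi(h\, s^B_m)$ for $m \ne m' > n$. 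I will establish this by proving that $h\, s^B_m$ and $h\, s^B_m s^B_{m'}$ have the same cycle type in $S(2^{m'})$, and are therefore conjugate in $S(2^\infty)$.

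The main computation is the cycle-type comparison. For each $h$-cycle $\mathcal{O} \subset X_n$ of length $\ell$, let $\ell'$ be the number of $u \in \mathcal{O}$ with $u \notin C'$. A fiber-by-fiber orbit analysis, over the coordinates beyond the $n$-th and distinct from $m, m'$, shows that $\mathcal{O}$ contributes identical cycle data to both $h\, s^B_m$ and $h\, s^B_m s^B_{m'}$: namely, $2^{m'-n}$ cycles of length $\ell$ when $\ell'$ is even, and $2^{m'-n-1}$ cycles of length $2\ell$ when $\ell'$ is odd. Cycle closure is governed by the parity of accumulated flip triggers along $\mathcal{O}$; passing from one flipped coordinate to two doubles the relevant initial data at positions $(x_m, x_{m'})$ but halves the number of free inactive coordinates, giving identical totals. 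This orbit combinatorics is the hard part; once the cycle types match, centrality of $\chi$ yields the needed identity and hence $(P^A)^2 = P^A$, so $P^A$ is an orthogonal projection.
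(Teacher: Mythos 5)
Your proposal is correct, and it splits into two parts relative to the paper. The existence and self-adjointness arguments are essentially the paper's: where the paper says ``one can check that the conjugacy class of $g_1^{-1}s^A_mg_2$ does not depend on $m$,'' your explicit conjugator $\tau$ (the swap of coordinates $m$ and $m'$, which fixes $A$ and commutes with $g_1,g_2$) is exactly that check, and then centrality of $\chi$, density of the vectors $\pi(g)\xi$, and the bound $\|\pi(s^A_m)\|=1$ give the weak limit just as in the paper. For idempotence, however, you take a genuinely different route. The paper never leaves the cyclic vector: it uses the immediate conjugacy of $s^A_{m_1}s^A_{m_2}s^A_{m_3}$ with $s^A_{m_1}s^A_{m_2}$ (both are involutions whose fixed set is exactly $A$) to obtain $\left(\left(P^A\right)^3\xi,\xi\right)=\left(\left(P^A\right)^2\xi,\xi\right)=\left\|P^A\xi\right\|^2$, and then the equality case of Cauchy--Schwartz together with the separating property of $\xi$ to promote this scalar identity to $P^A=\left(P^A\right)^2$. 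You instead verify $\left(\left(P^A\right)^2\eta_1,\eta_2\right)=\left(P^A\eta_1,\eta_2\right)$ on the whole dense set, which forces the stronger combinatorial statement that $hs^B_m$ and $hs^B_ms^B_{m'}$ are conjugate for an \emph{arbitrary} $h\in S(2^n)$; this statement is true, and your cycle counts are the right ones (along an $h$-cycle $\mathcal{O}$ of length $\ell$ with $\ell'$ points outside $C'$, each active coordinate is flipped $\ell'$ times per pass, so orbits close after $\ell$ steps when $\ell'$ is even and after $2\ell$ steps when $\ell'$ is odd, giving the same contribution for one or two active coordinates). What your route buys: it avoids the Cauchy--Schwartz/separating-vector trick, using nothing beyond centrality of $\chi$ and density of $\pi(g)\xi$, and the conjugacy computation with general $h$ is in the spirit of arguments the paper needs later anyway (e.g.\ Proposition \ref{asymp fixproj} and the appendix). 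What it costs: the orbit analysis, which you only sketch and call ``the hard part,'' is appreciably heavier than the paper's one-line conjugacy observation, so in a final write-up you should carry out that fiber-by-fiber count explicitly -- it is the only nontrivial step in your argument.
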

 \begin{proof} Consider any elements $g_1,g_2\in S(2^\infty)$. Fix
 $M$, such that $g_1,g_2\in S(2^M)$. One can check, that for
 any $m>M$ the conjugate class of the element $g_1^{-1}s^A_mg_2$
 doesn't depend on $m$. By the centrality of $\chi$, the value
 \begin{eqnarray}\left(\pi\left(s^A_m\right)\pi(g_2)\xi,\pi(g_1)\xi\right)=
 \chi\left(g_1^{-1}s^A_mg_2\right)
 \end{eqnarray} doesn't depend on the choice of $m>M$. By definition of the GNS-construction, the
  vectors $\pi(g)\xi,g\in S(2^\infty)$ are dense in $\mathcal{H}$.
  Therefore, there exists the weak limit $P^A=w-\lim\limits_{m\rightarrow\infty}
 \pi\left(s^A_m\right)$.

 Further, since
 $\pi\left(s^A_m\right)^{*}=\pi\left(\left(s^A_m\right)^{-1}\right)=\pi\left(s^A_m\right)$
 for any $m$, the operator $P^A$ is self-adjoint.
  As follows, $\left(P^A\right)^2$ is a positive operator.  Obviously, $\left\|P^A\right\|\leqslant 1$. One can check,
 that for any $m_1>m_2>m_3$ the elements $s^A_{m_1}s^A_{m_2}s^A_{m_3}$ and $s^A_{m_1}s^A_{m_2}$
 are conjugate. Therefore, \begin{eqnarray*}\left(\pi\left(s^A_{m_1}s^A_{m_2}s^A_{m_3}\right)\xi,\xi\right)=
 \left(\pi\left(s^A_{m_1}s^A_{m_2}\right)\xi,\xi\right).\end{eqnarray*} In
 the limit we get:\begin{eqnarray}\label{asymp eq}\left(\left(P^A\right)^3\xi,\xi\right)=\left(\left(P^A\right)^2\xi,\xi\right)=
 \left\|P^A\xi\right\|^2.\end{eqnarray}
 From the other hand, by the Cauchy-Schwartz inequality,
 \begin{eqnarray*}
\left(\left(P^A\right)^3\xi,\xi\right)\leqslant
\left\|P^A\xi\right\|\left\|\left(P^A\right)^2\xi\right\|\leqslant\left\|P^A\xi\right\|^2.
 \end{eqnarray*} The equality means, that $P^A\xi=c\left(P^A\right)^2\xi$ for some constant $c$. Since $\xi$ is separating,
 the latter means, that $P^A=c\left(P^A\right)^2$. From (\ref{asymp eq}) we get, that $P^A=\left(P^A\right)^2$,
  which finishes the proof.\end{proof}
  Recall, that the unique normalized trace on the algebra $\mathcal{M}_\pi$ is given by
  the formula: $tr(T)=(T\xi,\xi)$.
  \begin{Prop}\label{asymp properties} For any nice $A,B\subset X$ and $C\subset X_n,D\subset X_m$,
  $n,m\in\mathbb{N}$, the following is true:
  \begin{itemize}
  \item[1)] if $s\in S(2^\infty)$, then
  $\pi(s)P^A\pi(s^{-1})=P^{s(A)}$;
  \item[2)] $P^AP^B=P^{A\cap B}$;
  \item[3)] $tr\left(P^{C\times
  D\times X}\right)=tr\left(P^{C\times X}\right)tr\left(P^
  {D\times X}\right)$;
  \item[4)] if
  $\mu(A)\leqslant \mu(B)$, then $tr\left(P^A\right)\leqslant
  tr \left(P^B\right)$.
  \end{itemize}
  \end{Prop}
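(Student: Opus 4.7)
Claims (1) and (4) are short, while (2) and (3) rest on explicit cycle-structure counts in $S(2^N)$. For (1), for any $s\in S(2^k)$ with $k$ exceeding the defining depth of $A$ and any $m>k$, a direct check gives $ss^A_ms^{-1} = s^{s(A)}_m$ (conjugation transports $A$ onto $s(A)$ while leaving coordinate $m$ untouched); applying $\pi$ and letting $m\to\infty$ weakly yields (1). For (4), enlarging $n$ we may write $A = C\times X$, $B = D\times X$ with $C,D\subset X_n$ and $|C|\leqslant|D|$, and pick $\sigma\in S(2^n)$ with $\sigma(C)\subset D$: then $tr(P^A) = tr(P^{\sigma(A)})$ by (1) and trace-invariance under conjugation, while $\sigma(A)\subset B$ combined with (2) gives $P^{\sigma(A)}P^B = P^{\sigma(A)}$, i.e.\ $P^{\sigma(A)}\leqslant P^B$, so monotonicity of the trace finishes the proof.

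For (2), the starting point is that $s^A_{m_1}$ and $s^B_{m_2}$ commute whenever $m_1\neq m_2$ both exceed the common defining depth $K$ of $A$ and $B$, since each transposition reads only the first $K$ coordinates (which are left untouched by the other) and writes on its own independent index. Passing to weak limits gives $P^AP^B = P^BP^A$, so $P^AP^B$ is itself an orthogonal projection. A direct count of fixed points and $2$-cycles in $S(2^N)$ shows that the three elements $s^A_{m_1}s^B_{m_2}$, $s^{A\cap B}_{m_3}s^A_{m_1}s^B_{m_2}$, and $s^{A\cap B}_m$ all realize the same conjugacy class in $S(2^\infty)$ (each produces $|C\cap D|\cdot 2^{N-K}$ fixed points with the remainder in $2$-cycles, writing $A = C\times X$, $B = D\times X$ with $C,D\subset X_K$). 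Centrality of $\chi$ therefore yields
\[
(P^AP^B\xi,\xi)\;=\;(P^{A\cap B}\xi,\xi)\;=\;(P^{A\cap B}P^AP^B\xi,\xi),
\]
so expanding $\|P^AP^B\xi - P^{A\cap B}\xi\|^2$ collapses to zero. Since $\xi$ is separating for $\mathcal{M}_\pi$, we conclude $P^AP^B = P^{A\cap B}$.

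For (3), I would exploit Proposition~\ref{intro mult} (multiplicativity of $\chi$). A preliminary observation is that $\chi\circ\mathfrak{i}_l = \chi$, because $s$ and $\mathfrak{i}_l(s)$ have identical cycle structure once embedded in a sufficiently large $S(2^N)$ (the $2^l$-fold replication across slices balances against the extra embedding factor). Now $s^{C\times X}_k$ has cycle structure of fixed points and $2$-cycles with proportion $|C|/2^n$ fixed; this type is realized by a suitable involution $s_1\in S(2^{n+1})$ having $2|C|$ fixed points (working at level $n+1$ absorbs the parity constraint that fails in $S(2^n)$ when $|C|$ is odd). Set $s_2 = \mathfrak{i}_{n+1}(s^{D\times X}_{j'})\in S_{n+1}(2^\infty)$, which is conjugate in $S(2^\infty)$ to $\mathfrak{i}_n(s^{D\times X}_j) = s^{X_n\times D\times X}_{n+j}$. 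Slicing $S(2^N)$ by the $s_1$-orbits and using that $s^{D\times X}_{j'}$ is an involution, one shows that $s_1s_2$ and $s^{C\times X}_k\cdot s^{X_n\times D\times X}_{n+j}$ share the same cycle structure (both yielding $|C||D|\cdot 2^{N-n-m}$ fixed points with the rest in $2$-cycles), and hence are conjugate in $S(2^\infty)$. Proposition~\ref{intro mult} now gives $\chi(s_1s_2) = \chi(s_1)\chi(s_2) = \chi(s^{C\times X}_k)\chi(s^{D\times X}_{j'})$; combining with property (2) to identify $P^{C\times X}P^{X_n\times D\times X}$ with $P^{C\times D\times X}$, and passing to weak limits, gives (3).

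The main obstacle throughout is the cycle-structure bookkeeping: one must verify that the various products of $s^*_*$-generators land in a single $S(2^\infty)$-conjugacy class independently of the choice of the ``free'' indices, and in (3) one has to work at level $n+1$ (rather than $n$) of multiplicativity so that the finite-group model $s_1$ genuinely exists for every $C$.
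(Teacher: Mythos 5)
Your proposal is correct and follows essentially the same route as the paper: properties (1), (2), (4) via the identity $ss^A_ms^{-1}=s^{s(A)}_m$, stability of the conjugacy class of products of the generators $s^{*}_{*}$, centrality of $\chi$ and the separating vector $\xi$, and property (3) via multiplicativity (Proposition \ref{intro mult}) applied at level $n+1$. The only deviations are cosmetic: in (2) you replace the paper's Cauchy--Schwarz trace estimate by commutation of $s^A_{m_1}$, $s^B_{m_2}$ plus a norm expansion, and in (3) your auxiliary involution $s_1$ and the worry about parity are unnecessary, since the paper simply takes $s_1=s^{C\times X}_{n+1}\in S(2^{n+1})$ and $s_2=s^{X_{n+1}\times D\times X}_{n+m+2}\in S_{n+1}(2^\infty)$ directly.
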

  In the item $3)$ by $C\times D\times X$ we mean the set of sequences of the
  form $$(c_1,\ldots,c_n,d_1,\ldots,d_m,x_1,x_2,\ldots),$$ where
  $(c_1,\ldots,c_n)\in C$, $(d_1,\ldots,d_m)\in D$ and $x_i\in\{0,1\}$.
  \begin{proof} $1)$ The first property follows immediately from the
  equation $ss_{m}^As^{-1}=s_{m}^{s(A)}$ for large enough $m$.

 $2)$ Let $m_1>m_2>m_3$. It follows from (\ref{asymp snm}), that the
  elements $s_{m_1}^As_{m_2}^Bs_{m_3}^{A\cap
  B}$, $s_{m_1}^As_{m_2}^B$ and
  $s_{m_1}^{A\cap B}$ are in the same conjugate class. Therefore $$\chi\left(s_{m_1}^As_{m_2}^Bs_{m_3}^{A\cap
  B}\right)=\chi\left(s_{m_1}^As_{m_2}^B\right)=\chi\left(s_{m_1}^{A\cap B}\right).$$ When $m_1,m_2,m_3$ go
to infinity,
  we get
  \begin{eqnarray}\label{asymp eq1}tr\left(P^AP^BP^{A\cap
  B}\right)=tr\left(P^AP^B\right)=tr\left(P^{A\cap B}\right).
  \end{eqnarray} From the other hand, by the Cauchy-Schwartz inequality, centrality of $tr$ and lemma \ref{asymp proj},
\begin{eqnarray}tr\left(P^AP^BP^{A\cap
  B}\right)\leqslant tr\left(P^AP^B\right)^{\frac{1}{2}}tr\left(P^{A\cap B}\right)^{\frac{1}{2}}.
  \end{eqnarray} By (\ref{asymp eq1}), the equality holds.
  Therefore, as in the proof, that $P^A$ is an orthogonal
  projection, we get, that $P^AP^B=P^{A\cap B}$.

  $3)$ This property follows from the multiplicativity of $\chi$ (see
prop. \ref{intro  mult}). Indeed, by the definition of operators
  $P^A$ and the proof of the lemma \ref{asymp proj}, one has
  \begin{eqnarray}tr\left(P^{C\times D\times X}\right)=\chi
  \left(s_{n+m+1}^{C\times D\times X}\right).
  \end{eqnarray} The conjugate class of the element $s_{n+m+1}^{C\times
  D\times X}$ contains
  $s_{n+1}^{C\times X} s_{n+m+2}^{X_{n+1}\times D\times X}$. Thus,
  $\chi\left(s_{n+m+1}^{C\times D\times X}\right)=\chi\left(s_{n+1}^{C\times X} s_{n+m+2}^{X_{n+1}\times
  D\times X}\right)$. By the proposition \ref{intro mult},
  \begin{eqnarray}\chi\left(s_{n+1}^{C\times X} s_{n+m+2}^{X_{n+1}\times
  D\times X}\right)=\chi\left(s_{n+1}^{C\times X}\right)\chi\left(s_{n+m+2}^{X_{n+1}\times
  D\times X}\right).\end{eqnarray} To finish the proof, we note, that by
  centrality of $\chi$,
  $\chi\left(s_{n+1}^{C\times X}\right)=tr\left(P^{C\times X}\right)$ and $\chi\left(s_{n+m+2}^{X_{n+1}\times
  D\times X}\right)=tr\left(P^{D\times X}\right)$.

  $4)$ By the property $1)$, without loss
  of generality we may assume, that $A\subset B$. By the property
  $2)$, $P^A\leqslant P^B$. Therefore, $tr\left(P^A\right)\leqslant
  tr \left(P^B\right)$.
  \end{proof}
 \begin{Co}\label{asymp alpha} There exists $\alpha\in \mathbb{R}_+\cup\{\infty\}$, such that for any $n$
  and finite union of cylinders $A\subset X$ one has
  $tr\left(P^A\right)=\mu(A)^\alpha$.
\end{Co}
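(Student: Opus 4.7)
The plan is to reduce the claim to the statement that a monotone multiplicative function on the dyadic rationals in $[0,1]$ is a power function. Every finite union of cylinders in $X$ is nice, i.e., of the form $A = C \times X$ with $C \subset X_k$ for $k$ large enough, so $\mu(A) = |C|/2^k$ lies in $D := \{j/2^n : 0 \leq j \leq 2^n,\ n \geq 0\}$. Since the normalized trace is invariant under conjugation, property 1) of Proposition \ref{asymp properties} gives $tr(P^A) = tr(P^{s(A)})$ for every $s \in S(2^k)$; because $S(2^k)$ acts transitively on size-$|C|$ subsets of $X_k$, $tr(P^{C \times X})$ depends only on $|C|/2^k$. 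A routine consistency check (refining $C \subset X_k$ to $C \times X_1 \subset X_{k+1}$ describes the same nice set) then yields a well-defined function $f : D \to [0,1]$ with $f(\mu(A)) = tr(P^A)$.

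Next I collect the relevant properties of $f$. Property 3) of Proposition \ref{asymp properties} is exactly multiplicativity $f(xy) = f(x)f(y)$ on $D$; property 4) is monotonicity (nondecreasing); and $f(1) = tr(P^X) = 1$, since $s_m^X = e$ makes $P^X = I$. Set $c := f(1/2) \in [0,1]$. If $c = 0$, then for any $x \in D \cap (0,1)$ some power $x^n$ lies in $[0,1/2]$, so $f(x)^n = f(x^n) \leq f(1/2) = 0$ by monotonicity, forcing $f(x) = 0$; with the stated conventions this is $f(x) = x^\infty$. If $c > 0$, set $\alpha := -\log_2 c \in [0,\infty)$; for $x \in D \cap (0,1)$ and $n \geq 1$, pick $k = k(n) \geq 1$ with $2^{-k} \leq x^n < 2^{-k+1}$. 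Monotonicity and multiplicativity sandwich $c^k \leq f(x)^n \leq c^{k-1}$; taking logarithms, dividing by $n$, and using $k(n)/n \to -\log_2 x$ then yields $f(x) = x^\alpha$.

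The main obstacle is this last step: a Cauchy-style sandwiching argument is needed to promote the pointwise relations on the multiplicatively closed dense subset $D \subset [0,1]$ to a clean power-law identity. The interior values $x \in (0,1)$ are handled as above; the endpoint $x = 0$ requires separate thought, since multiplicativity alone forces only $f(0) \in \{0,1\}$. In the regimes $\alpha > 0$ and $\alpha = \infty$, monotonicity gives $f(0) \leq f(2^{-n}) = c^n \to 0$, so $f(0) = 0 = 0^\alpha$; in the regime $\alpha = 0$, one verifies $f(0) = 1 = 0^0$ with the convention of the theorem. Combined with the reduction of Step 1, this delivers $tr(P^A) = \mu(A)^\alpha$ for every finite union of cylinders in $X$.
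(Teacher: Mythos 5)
Your overall route is the same as the paper's: reduce everything to a function of the measure on the dyadic rationals, observe it is monotone and multiplicative, and conclude it is a power $d\mapsto d^\alpha$ (the paper splits into the cases $\alpha=0$, $\alpha=\infty$, $0<\alpha<\infty$; your organization by $c=f(1/2)$ covers the same ground, and your sandwiching argument $c^k\leqslant f(x)^n\leqslant c^{k-1}$ actually supplies a step the paper leaves implicit). The reduction itself is fine: every finite union of cylinders is nice, conjugation invariance of $tr$ plus transitivity of $S(2^k)$ on subsets of fixed cardinality gives well-definedness of $f$, and properties 3) and 4) of Proposition \ref{asymp properties} give multiplicativity and monotonicity.

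There is, however, one genuine gap: the value $f(0)=tr\left(P^{\emptyset}\right)$ in the regime $\alpha=0$. You correctly flag that multiplicativity only forces $f(0)\in\{0,1\}$, but then write that ``one verifies $f(0)=1$'' without any argument --- and no argument is available from the tools you have set up, because the function equal to $1$ on $(0,1]$ and to $0$ at $0$ is monotone, multiplicative and has $f(1)=1$, so the dichotomy cannot be resolved at the level of $f$ alone. The paper resolves it by returning to the operators: since $\xi$ is separating for $\mathcal{M}_\pi$ (equivalently, the trace is faithful), $tr\left(P^A\right)=1$ forces $P^A=Id$ for every nonempty nice $A$; then property 2) applied to two disjoint nonempty nice sets gives $P^{\emptyset}=P^AP^B=Id$, hence $tr\left(P^{\emptyset}\right)=1=0^0$. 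This edge case is not vacuous downstream: for a fixed-point-free $s$ the classification needs $\chi(s)=\mu(\emptyset)^\alpha$, so the $\alpha=0$ (trivial character) case really uses $tr\left(P^{\emptyset}\right)=1$. Adding the faithfulness argument closes the gap; the rest of your proof stands.
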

\begin{proof} We split the proof into three cases, according to the
possible values of $\alpha$ ($0$, $\infty$ or a positive number).

$1.$ First assume, there exists $C\subset X_n$, $C\neq X_n$, such
that $tr\left(P^{C\times X}\right)=1$. Then for any $m$ and any
$D\subset X_m, D\neq \emptyset$ one can find $k$, such that
$\mu\left(C^k\times X\right)\leqslant \mu(D\times X)$. By the
proposition \ref{asymp properties}, $tr\left(P^{D\times
X}\right)\geqslant tr\left(P^{C^k\times X}\right)=1$. Therefore,
$tr\left(P^{D\times X}\right)=1$. Now we only need to check, that
$tr\left(P^{\emptyset}\right)=1$. Since $\xi$ is separating for
$\mathcal{M}_\pi$, it follows, that $P^A=Id$ for any finite union of
cylinders $A\neq \emptyset$. By the property $2)$ from the
proposition \ref{asymp properties}, $P^{\emptyset}=Id$. Thus, in
this case the corollary holds for $\alpha=0$.

$2.$ Now assume, that there exists $C\subset X_n$, $C\neq
\emptyset$, such that $tr\left(P^{C\times X}\right)=0$. Using the
same ideas, as in the case $1$, one can prove, that for any finite
union of cylinders $B\subset X,B\neq X$, one has
$tr\left(P^B\right)=0$. Since $P^X=Id$, $tr\left(P^X\right)=1$. In
this case the corollary holds for $\alpha=\infty$.

$3.$ Now assume, that $0<tr\left(P^A\right)<1$ for any $n$ and any
$A\subset X$, such that $A\neq X$ and $A\neq \emptyset$. It follows
from the property $4)$ of the previous proposition, that
$tr\left(P^A\right)$ depends only on the measure of $A$. Therefore,
there is a function $\varphi$ from the set
$\mathcal{D}=\left\{\frac{p}{2^q}:p,q\in\mathbb{N}, p<2^q \right\}$
of dyadic numbers to the set of positive numbers, such that
\begin{eqnarray}
tr\left(P^A\right)=\varphi(\mu(A))\;\;\text{for any}\;\;A\subset
X,A\neq X,A\neq\emptyset.
\end{eqnarray} By the properties $3),4)$ from the proposition \ref{asymp
properties}, $\varphi$ is a monotone multiplicative homomorphism. It
follows, that there exists $0<\alpha<\infty$, such that
$\phi(d)=d^\alpha$ for any $d\in\mathcal{D}$.
\end{proof}

\section{The proof of the classification theorems.}
\begin{Prop}\label{asymp fixproj} Let $A\subset X$ be nice and $s\in S(2^\infty)$, such
that $A\subset Fix(s)=\{x\in X:s(x)=x\}$. Then $\pi(s)P^A=P^A$.
\end{Prop}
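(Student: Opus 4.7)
The plan is to reduce the statement to the case where $s$ is a single transposition, and to settle that case by a direct cycle-type computation combined with the separating property of $\xi$.

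First, pick $M$ large enough that $s\in S(2^M)$ and, by enlarging $k$ if necessary, write $A=\tilde C\times X$ with $\tilde C\subset X_M$. The hypothesis $A\subset Fix(s)$ means that $s$ acts as the identity on $\tilde C$ and permutes $X_M\setminus\tilde C$. Hence $s$ factors as a product $s=t_1\cdots t_r$ of transpositions $t_i=(a_i,b_i)$ with $a_i,b_i\in X_M\setminus\tilde C$; each $t_i$ then also satisfies $A\subset Fix(t_i)$. Once $\pi(t)P^A=P^A$ is established for a single such transposition $t$, a trivial induction gives $\pi(s)P^A=\pi(t_1)\cdots\pi(t_r)P^A=P^A$. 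So the problem reduces to the transposition case.

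Fix a transposition $t=(a,b)$ with $a,b\in X_M\setminus\tilde C$, and take $m>M$. The key step is to show that $t\cdot s^A_m$ and $s^A_m$ have the same cycle type in $S(2^m)$. Using the decomposition $X_m=X_M\times X_{m-M-1}\times\{0,1\}$, both elements fix $\tilde C\times X_{m-M-1}\times\{0,1\}$ pointwise. On the complement, $s^A_m$ acts as the involution $(u,v,w)\mapsto (u,v,1-w)$, giving $|X_M\setminus\tilde C|$ 2-cycles for each $v\in X_{m-M-1}$. The element $t\cdot s^A_m$ acts as $(u,v,w)\mapsto(t(u),v,1-w)$; for each $v$ the points with $u\notin\{a,b\}$ again pair off as $(u,v,0)\leftrightarrow(u,v,1)$, while the four points $\{a,b\}\times\{v\}\times\{0,1\}$ regroup into the two 2-cycles $(a,v,0)\leftrightarrow(b,v,1)$ and $(a,v,1)\leftrightarrow(b,v,0)$. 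The total counts of fixed points and of 2-cycles therefore match those of $s^A_m$, so the two elements are conjugate in $S(2^m)$. By centrality, $\chi(t\cdot s^A_m)=\chi(s^A_m)$ for every $m>M$, and passing to the limit $m\to\infty$ yields $tr(\pi(t)P^A)=tr(P^A)$.

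To conclude, property~1) of Proposition~\ref{asymp properties} combined with $t(A)=A$ shows that $\pi(t)$ commutes with $P^A$; since $t$ is an involution, $\pi(t)P^A$ is then self-adjoint, so $tr(\pi(t)P^A)$ is real. Expanding norms,
\begin{equation*}
\|(\pi(t)-I)P^A\xi\|^2 = 2\,tr(P^A)-2\,tr\bigl(P^A\pi(t)P^A\bigr) = 2\,tr(P^A)-2\,tr(\pi(t)P^A)=0.
\end{equation*}
Hence $(\pi(t)P^A-P^A)\xi=0$; since $\pi(t)P^A-P^A\in\mathcal{M}_\pi$ and $\xi$ is separating, this forces $\pi(t)P^A=P^A$.

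The main obstacle is the cycle-type bookkeeping in the transposition case. Note that the reduction to transpositions is genuinely needed: for a general $s$ (say a 3-cycle of elements of $X_M\setminus\tilde C$) the elements $s\cdot s^A_m$ and $s^A_m$ are \emph{not} conjugate, so the equality $tr(\pi(s)P^A)=tr(P^A)$ is only available a posteriori, after handling the generating transpositions.
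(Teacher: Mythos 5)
Your proof is correct and follows essentially the same route as the paper: the paper also reduces to elements with only cycles of length $1$ and $2$ fixing $A$ (which generate all permutations fixing $A$), shows $ss^A_m$ is conjugate to $s^A_m$ to get the trace equality in the limit, and then upgrades this to $\pi(s)P^A=P^A$ via an equality-case argument and the separating property of $\xi$. Your only variations are cosmetic: you work with single transpositions rather than general involutions, and you close the argument with commutativity from Proposition~\ref{asymp properties}(1) and a norm expansion instead of the paper's Cauchy--Schwarz equality case.
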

\begin{proof} There exists $n$, such that
$A=C\times X$ for some $C\subset X_n$ and $s\in S(2^n)$.
 Assume first, that $s$ contains only cycles of length $1$ and $2$.
Than the permutations $ss_{m}^A$ and $s_{m}^A$ are conjugate for
large $m$. Therefore,
$\left(\pi(s)P^A\xi,\xi\right)=\left(P^A\xi,\xi\right)=\left\|P^A\xi\right\|^2$.
 Using the Cauchy-Schwartz inequality, we get
 \begin{eqnarray}\left(\pi(s)P^A\xi,\xi\right)=\left(\pi(s)P^A\xi,P^A\xi\right)\leqslant
\left\|\pi(s)P^A\xi\right\|\left\|P^A\xi\right\|
\leqslant\left\|P^A\xi\right\|^2.\end{eqnarray} Since the equality
holds, $\pi(s)P^A\xi=P^A\xi$. Since $\xi$ is separating,
$\pi(s)P^A=P^A$. Now notice, that permutations $s\in S(2^n)$, such
that $A\subset Fix(s)$ and $s$ has only cycles of length 1 and 2,
generate all permutations $w\in S(2^n)$, such that $A\subset
Fix(w)$. This finishes the proof.
\end{proof}
\begin{Co}\label{asymp evencycles}
Let $s\in S(2^\infty)$ have cycles of length 1 and only. Then
$\chi(s)=\mu(Fix(s))^\alpha$, where $\alpha$ is from the corollary
\ref{asymp alpha}.
\end{Co}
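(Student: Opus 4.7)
My plan is to recognize $s$ and $s_m^A$ (with $A = Fix(s)$) as conjugate elements of $S(2^\infty)$ for all sufficiently large $m$, and then exploit the weak convergence $\pi(s_m^A) \to P^A$ from Lemma \ref{asymp proj} together with the identification $tr(P^A) = \mu(A)^\alpha$ from Corollary \ref{asymp alpha}. This turns an arbitrary involution into an ``elementary'' one whose character we have already computed asymptotically.

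First, fix $n$ so that $s \in S(2^n)$, and write $A = C \times X$ with $C \subset X_n$ the set of fixed points of $s$ on $X_n$; let $k = (2^n - |C|)/2$ denote the number of 2-cycles of $s$. For any $m > n$, I would compute the cycle type of $s$ and of $s_m^A$ as elements of $S(2^m)$. Embedding $s$ into $S(2^m)$ yields $|C| \cdot 2^{m-n}$ fixed points and $k \cdot 2^{m-n}$ 2-cycles, because each 2-cycle of $s$ on $X_n$ generates one 2-cycle per choice of tail in $X_{m-n}$. The permutation $s_m^A$ fixes $C \times X_{m-n}$, giving the same count of fixed points, and on the complement pairs $(c, y_{[n+1,m-1]}, 0) \leftrightarrow (c, y_{[n+1,m-1]}, 1)$ for $c \in X_n \setminus C$, which yields $(2^n - |C|) \cdot 2^{m-n-1} = k \cdot 2^{m-n}$ 2-cycles. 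Since the cycle types coincide, and conjugacy classes in $S(2^m)$ are determined by cycle type, the elements $s$ and $s_m^A$ are conjugate in $S(2^m) \subset S(2^\infty)$.

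Centrality of $\chi$ then gives $\chi(s) = \chi(s_m^A)$ for every $m > n$. Letting $m \to \infty$, the right-hand side converges to $(P^A \xi, \xi) = tr(P^A) = \mu(A)^\alpha$ by Lemma \ref{asymp proj} and Corollary \ref{asymp alpha}, so $\chi(s) = \mu(Fix(s))^\alpha$. No serious obstacle is anticipated: the argument reduces to a cycle-type bookkeeping check, and the three cases of Corollary \ref{asymp alpha} (namely $\alpha = 0$, $\alpha \in (0,\infty)$, $\alpha = \infty$) are absorbed automatically into the single formula $\mu(A)^\alpha$ via the stated conventions on $0^0$, $x^\infty$ for $x \in [0,1)$, and $1^\infty$.
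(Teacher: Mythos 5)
Your proof is correct, and it is a slightly more direct route than the paper's. The paper conjugates $s$ to the product $s\,s_m^A$ (both involutions with fixed set $A$), passes to the limit to get $\chi(s)=tr\left(\pi(s)P^A\right)$, and then invokes Proposition \ref{asymp fixproj} to replace $\pi(s)P^A$ by $P^A$. You instead observe that $s$ itself is conjugate to $s_m^A$ in $S(2^m)$ for every $m>n$ --- your cycle-type count ($|C|\cdot 2^{m-n}$ fixed points and $k\cdot 2^{m-n}$ two-cycles for both) is correct, and conjugacy in $S(2^m)$ passes to $S(2^\infty)$ --- so centrality gives $\chi(s)=\chi\left(s_m^A\right)$, and the weak limit of Lemma \ref{asymp proj} together with Corollary \ref{asymp alpha} yields $\chi(s)=tr\left(P^A\right)=\mu(A)^\alpha$ with no appeal to Proposition \ref{asymp fixproj} at all. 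What your shortcut buys is a self-contained argument for involutions; what the paper's formulation buys is the identity $\pi(s)P^A=P^A$, which is exactly the tool reused in the next proposition (for general $s$, to show the vectors $\eta_i=\left(\pi(s_i)-P^A\right)\xi$ are pairwise orthogonal), so the two treatments differ mainly in where that lemma is allowed to enter.
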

\begin{proof} Denote $A=Fix(s)$. Then for large $m$ the elements $s$ and $ss_{m}^A$ are
conjugate. Therefore, using the propositions \ref{asymp fixproj} and
corollary \ref{asymp alpha}, we get
\begin{eqnarray}\chi(s)=tr\left(\pi(s)P^A\right)=tr\left(P^A\right)=\mu(A)^\alpha.
\end{eqnarray}
\end{proof}
\begin{Prop}
For any $s\in S(2^\infty)$ one has $\chi(s)=\mu(Fix(s))^\alpha$,
where $\alpha$ is the number from the corollary \ref{asymp alpha}.
\end{Prop}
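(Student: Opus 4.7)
The plan is to reduce the general case to the involution case already handled in Corollary~\ref{asymp evencycles}. Let $s \in S(2^n)$ and set $A = Fix(s)$, which is a nice subset of $X$.

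First, I would establish the key intermediate identity $\chi(s \cdot s_m^A) = \mu(A)^\alpha$ for all $m > n$. By Proposition~\ref{asymp fixproj}, $\pi(s)P^A = P^A$, and combined with the weak limit $\pi(s_m^A) \to P^A$ from Lemma~\ref{asymp proj},
\[
\lim_{m \to \infty}\chi(s \cdot s_m^A) = (\pi(s)P^A\xi,\xi) = (P^A\xi,\xi) = \mu(A)^\alpha.
\]
A direct cycle-type computation shows that, with $C = Fix(s|_{X_n})$, each $k$-cycle of $s$ on $X_n \setminus C$ contributes to $s \cdot s_m^A$ cycles of length $k$ (if $k$ is even) or $2k$ (if $k$ is odd), with multiplicities that match between $s \cdot s_m^A$ and $s \cdot s_{m+1}^A$ in $X_{m+1}$. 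Hence these elements are conjugate in $S(2^\infty)$, so $\chi(s \cdot s_m^A)$ is constant for $m > n$ and equals $\mu(A)^\alpha$.

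If every cycle of $s$ has even length or length $1$, then $s$ and $s \cdot s_m^A$ have identical cycle types, hence are conjugate in $S(2^\infty)$, and $\chi(s) = \chi(s \cdot s_m^A) = \mu(A)^\alpha$. This case subsumes and extends Corollary~\ref{asymp evencycles}.

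The main obstacle is the case when $s$ has a cycle of odd length $\geq 3$, for then $s$ and $s \cdot s_m^A$ genuinely differ in cycle type. To overcome this, I would apply multiplicativity (Proposition~\ref{intro mult}) to the product $s \cdot \mathfrak{i}_n(s^{-1}) \in S(2^{2n})$: since $s$ and $s^{-1}$ share cycle type and are therefore conjugate in $S(2^\infty)$, $\chi$ is real-valued and
\[
\chi\bigl(s \cdot \mathfrak{i}_n(s^{-1})\bigr) = \chi(s)\chi(s^{-1}) = \chi(s)^2.
\]
The fixed-point set of $s \cdot \mathfrak{i}_n(s^{-1})$ equals $(C \times C) \times X$, with measure $\mu(A)^2$, so identifying the conclusion of the proposition for this product element would give $\chi(s)^2 = \mu(A)^{2\alpha}$ and hence $|\chi(s)| = \mu(A)^\alpha$. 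The sign of $\chi(s)$ is then pinned down using the positive-definiteness of $\chi$: the PSD matrix $[\chi(s^{i-j})]_{i,j=1}^{N}$ (where $N$ is the order of $s$) constrains $\chi(s) \geq 0$ via eigenvalue analysis, and comparison with the already-established even-cycle values completes the identification $\chi(s) = \mu(A)^\alpha$.
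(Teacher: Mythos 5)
Your first two steps are fine: $\operatorname{tr}(\pi(s)P^A)=\operatorname{tr}(P^A)$ does give $\chi(s\,s_m^A)=\mu(A)^\alpha$ for large $m$, and when all cycles of $s$ are even or trivial the conjugacy $s\sim s\,s_m^A$ settles that case (this is exactly the paper's Corollary~\ref{asymp evencycles}). The gap is in the odd-cycle case, which is the actual content of the proposition. Your reduction via multiplicativity is circular: the relation $\chi(s\,\mathfrak{i}_n(s^{-1}))=\chi(s)^2$ and $Fix(s\,\mathfrak{i}_n(s^{-1}))=(C\times C)\times X$ are both correct, but the element $s\,\mathfrak{i}_n(s^{-1})$ still contains odd cycles of length $\geqslant 3$ whenever $s$ does (e.g.\ a point $(x,y)$ with $x$ in a $k$-cycle of $s$ and $y\in C$ lies on a $k$-cycle of the product), so ``identifying the conclusion of the proposition for this product element'' is invoking precisely the unproven case; iterating the construction only reproduces the same unknown. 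The final sign argument is also not valid as stated: positive-definiteness of $\chi$ restricted to the cyclic group $\langle s\rangle$ only forces the spectral weights (Fourier coefficients) to be nonnegative, not $\chi(s)\geqslant 0$ --- for instance the sign character on a group generated by a transposition is positive definite with $\chi(s)=-1$ --- and the powers $s^j$ of an element with odd cycles never acquire even cycles, so comparison with the even-cycle values does not pin anything down.

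The paper closes the odd-cycle case by an averaging/orthogonality argument that your proposal has no analogue of: for every $k$ it constructs (Appendix, Proposition~\ref{append si}) conjugates $s_1,\dots,s_k$ of $s$ with $Fix(s_is_j^{-1})=Fix(s_i)=A$ and $s_is_j^{-1}$ having only cycles of length $1$ and $2$. Then the vectors $\eta_i=(\pi(s_i)-P^A)\xi$ all have the same inner product with $\xi$ as $\eta=(\pi(s)-P^A)\xi$, are pairwise orthogonal (by Corollary~\ref{asymp evencycles} applied to $s_is_j^{-1}$ together with $\pi(s_i)P^A=P^A$), and have equal norms, so $|(\eta,\xi)|\leqslant\|\eta\|/\sqrt{k}$ for all $k$, forcing $\chi(s)=\operatorname{tr}(P^A)=\mu(A)^\alpha$. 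Some such mechanism for converting the already-known values on involution-type elements into the value at $s$ itself is what your argument is missing.
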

\begin{proof}
Let $s\neq e$. Put $A=Fix(s)$. Fix arbitrary $k\in \mathbb{N}$.
There exist permutations $s_1,\ldots,s_k\in S(2^\infty)$, such that
the following is true:
\begin{itemize}
\item[1)] elements $s_i$ are conjugate to $s$;
\item[2)] $Fix\left(s_is_j^{-1}\right)=Fix(s_i)=A$ for any $i\neq j$;
\item[3)] for any $i\neq j$ the element $s_is_j^{-1}$ has cycles of length 1 and 2 only.
\end{itemize} We postpone the proof of the existence of $s_i$ to the
appendix, since this statement is purely combinatoric. Consider the
system of vectors $\eta_i=\left(\pi(s_i)-P^A\right)\xi$. Put
$\eta=\left(\pi(s)-P^A\right)\xi$. It follows from $1)$, that
$(\eta_i,\xi)=(\eta,\xi)$ for any $i$. By the proposition \ref{asymp
fixproj} and property $2)$ of $s_i$, $\pi(s_i)P^A=P^A$. Thus, by the
corollaries \ref{asymp alpha} and \ref{asymp evencycles}, for any
$i\neq j$
\begin{eqnarray}
(\eta_i,\eta_j)=\left(\left(\pi(s_i)-P^A\right)\xi,\left(\pi(s_j)-P^A\right)\xi\right)=
\chi\left(s_is_j^{-1}\right)-tr\left(P^A\right)=0.
\end{eqnarray} Note, that by the same reasons $\|\eta_i\|=\sqrt{1-tr\left(P^A\right)}=\|\eta\|$ for any $i$. Therefore, one has:
\begin{eqnarray}
|(\eta,\xi)|=\frac{1}{k}\left|\left(\sum\limits_{i=1}^k\eta_i,\xi\right)\right|\leqslant
\frac{1}{k}\left\|\sum\limits_{i=1}^k\eta_i\right\|=\frac{\|\eta\|}{\sqrt{k}}.
\end{eqnarray}
Since $k$ is arbitrary, the last inequality means $(\eta,\xi)=0$. It
follows, that $\chi(s)=tr\left(P^A\right)=\mu(A)^\alpha$.
\end{proof}
\begin{Prop} Let $0<\alpha<\infty$. Denote $\chi_\alpha(s)=\mu(Fix(s))^\alpha,s\in S(2^\infty)$.
Assume, that $\chi_\alpha$ is a character. Then $\alpha\in
\mathbb{N}$.
\end{Prop}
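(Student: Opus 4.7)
The plan is to exploit positive-definiteness of $\chi_\alpha$ on each finite subgroup $S(2^n)$ combined with the fact that the pointwise product of two characters is again a character, to force $\alpha\in\mathbb{N}$. First I would note that if $\chi_\alpha$ is a character and $(\pi,\mathcal{H},\xi)$ its GNS construction, then for each $N=2^n$ the squared norm of the vector $\eta_N:=\sum_{\sigma\in S(2^n)}\operatorname{sgn}(\sigma)\pi(\sigma)\xi$ is nonnegative, which after expansion reads $\sum_{\sigma\in S(2^n)}\operatorname{sgn}(\sigma)\chi_\alpha(\sigma)\geq 0$. Writing $m_1(\sigma):=2^n\mu(Fix(\sigma))$ for the number of points in $X_n$ fixed by $\sigma$, so that $\chi_\alpha(\sigma)=(m_1(\sigma)/N)^\alpha$, this becomes $F_N(\alpha)\geq 0$, where
\[
F_N(\alpha):=\sum_{\sigma\in S(2^n)}\operatorname{sgn}(\sigma)m_1(\sigma)^\alpha.
\]

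Next I would evaluate $F_N$ via its generating function. The exponential formula yields
\[
G_N(z):=\sum_{\sigma\in S(2^n)}\operatorname{sgn}(\sigma)z^{m_1(\sigma)}=(z-1)^{N-1}(z+N-1).
\]
Expanding $m_1(\sigma)^d$ as $\sum_{(x_1,\ldots,x_d)\in X_n^d}\mathbf{1}[\sigma x_i=x_i\text{ for all }i]$ combined with the identity $\sum_{\tau\in S_m}\operatorname{sgn}(\tau)=0$ for $m\geq 2$ shows that $F_N(d)=0$ for every integer $d\in\{1,\ldots,N-2\}$. Moreover $F_N(\alpha)$ is a linear combination of the $N-1$ exponentials $\alpha\mapsto k^\alpha$ with $k\in\{1,\ldots,N-2,N\}$ (the coefficient at $k=N-1$ vanishes since $G_N$ has no $z^{N-1}$ term), so by the standard Rolle-type bound for exponential sums it has at most $N-2$ real zeros; combined with $F_N(\alpha)\to N-1>0$ as $\alpha\to 0^+$ and $F_N(\alpha)\to+\infty$ as $\alpha\to+\infty$, this forces $F_N$ to change sign exactly at each of $1,\ldots,N-2$, with $\operatorname{sign}(F_N(\alpha))=(-1)^k$ on $(k,k+1)$. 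The analytic check that pins down the sign is that $G_N(e^{-s})$ has constant sign $(-1)^{N-1}$ for $s>0$, which via the identity $\sum_k[z^k]G_N(z)/(t+k)=\int_0^\infty e^{-ts}G_N(e^{-s})\,ds$ and a Mellin-type representation of $k^\alpha$ yields the sign of $F_N(\alpha)$ directly. In particular, for $N=2^n$ and $k$ odd with $k\leq N-3$ we have $F_N(\alpha)<0$ on $(k,k+1)$; letting $n$ grow rules out $\alpha$ in every open interval $(k,k+1)$ with $k$ a positive odd integer.

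To close the remaining intervals $(k,k+1)$ with $k\geq 0$ even, I would use that the entrywise product of two positive semidefinite matrices is positive semidefinite (Schur product theorem), so the pointwise product $\chi_\alpha\cdot\chi_1=\chi_{\alpha+1}$ is again a character (here $\chi_1$ is the character constructed explicitly in Section 2). Hence $\alpha\in(k,k+1)$ with $k$ even would force $\alpha+1\in(k+1,k+2)$ with $k+1$ odd, contradicting the previous step. Therefore no non-integer $\alpha>0$ can make $\chi_\alpha$ a character, and $\alpha\in\mathbb{N}$. The main obstacle will be to justify rigorously the alternating sign pattern of $F_N$ on non-integer intervals; the cleanest route is via the integral representation sketched above.
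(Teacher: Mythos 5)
Your proposal is correct, and after the common first step it follows a genuinely different route from the paper. Both arguments begin identically: positive-definiteness of $\chi_\alpha$ gives $\sum_{\sigma\in S(2^n)}\operatorname{sgn}(\sigma)\chi_\alpha(\sigma)\geqslant 0$, i.e. $F_N(\alpha)\geqslant 0$ for $N=2^n$ (this is exactly the paper's projection $Alt(n)$ paired with $\xi_\alpha$). From there the paper computes this sum explicitly: it evaluates the signed derangement sums $\Sigma_j=(-1)^{j-1}(j-1)$, expands $(1-j/m)^\alpha$ by the binomial series, rewrites the result through Stirling numbers $S(k,m)$, and exhibits negativity at the specific size $m=[\alpha]+3$ or $m=[\alpha]+4$. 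You instead use the closed form $G_N(z)=(z-1)^{N-1}(z+N-1)$ (which is correct), note that $F_N$ vanishes at the integers $1,\dots,N-2$ and is a combination of the $N-1$ exponentials $k^\alpha$, $k\in\{1,\dots,N-2,N\}$, so the Rolle-type bound forces these $N-2$ zeros to be simple and to be the only real zeros; together with $F_N(\alpha)\to+\infty$ as $\alpha\to\infty$ this gives sign $(-1)^k$ on $(k,k+1)$ (e.g. $N=4$: $F_4(\alpha)=(2^\alpha-2)(2^\alpha-4)$), which rules out noninteger $\alpha$ with odd integer part; the Schur-product identity $\chi_\alpha\cdot\chi_1=\chi_{\alpha+1}$ (legitimate, since $\chi_1(s)=\mu(Fix(s))$ is shown to be a character in Section 2, and the Hadamard product of nonnegatively defined matrices is nonnegatively defined) then shifts the parity and handles even integer parts, including $\alpha\in(0,1)$. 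Two remarks. First, your closing worry is unfounded: the Mellin/integral detour is unnecessary, because the zero-counting argument you already gave pins down the alternating sign pattern completely; one only needs $F_N\not\equiv 0$ (the coefficient of $N^\alpha$ is $1$), and the nonvanishing of the remaining coefficients is forced by the presence of $N-2$ distinct zeros. Second, your route buys something the paper's does not address: you use the positivity only at $N=2^n$, which is exactly what the character hypothesis on $S(2^\infty)$ provides, whereas the paper's final negativity occurs at $m=[\alpha]+3$ or $[\alpha]+4$, which need not be a power of two; the price you pay is the extra input of the Section 2 character $\chi_1$ and the Schur product theorem.
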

\begin{proof} Note first, that $\chi_\alpha$ is indecomposable by the proposition \ref{intro mult}.
Let $(\pi_\alpha,\mathcal{H}_\alpha,\xi_\alpha)$ be the
GNS-construction, corresponding to $\chi_\alpha$.
%If $\chi_\alpha$
%and $\chi_\beta$ are characters, then $\pi_\alpha\otimes \pi_\beta$
%is a factor representation with the character $\chi_{\alpha+\beta}$.
%Thus, $\chi_{m\alpha+n}$ is an indecomposable character for any
%$m,n\in \mathbb{Z}_+$.

Let $n\in \mathbb{N}$. Following Okounkov $\cite{Ok2}$, consider the
orthogonal projection
$$Alt(n)=\frac{1}{2^n!}\sum\limits_{s\in
S(2^n)}\sigma(s)\pi_\alpha(s),$$ where $\sigma(s)$ is the sign of
the permutation $s$. One has:\begin{eqnarray}\label{main
sum}\begin{split}0\leqslant(Alt(n)\xi_\alpha,\xi_\alpha)=\frac{1}{2^n!}\sum\limits_{s\in
S(2^n)}\sigma(s)\chi_\alpha(s)=\\ \frac{1}{2^n!}\sum\limits_{s\in
S(2^n)}\sigma(s)\frac{|\{x\in X_n:s(x)=x\}|^\alpha}{2^{\alpha
n}}.\end{split}\end{eqnarray} Calculate the last sum.
 Denote
$\Sigma_k=\sum\limits_{E_k}\sigma(s)$, where $E_k$ is the set of
permutations $s\in S(k)$, such that $s(j)\neq j$ for $1\leqslant
j\leqslant k$. We prove by induction, that
$\Sigma_k=(-1)^{k-1}(k-1)$. Base $k=1,2$ is obvious. Let $k\geqslant
2$. Each element of $E_{k+1}$ can be represented as $(i,k+1)s$,
where $s$ is either any element of $E_k$, or a permutation from
$S(k)$, such that $s(i)=i$ and $s(j)\neq j$ for $j\neq i,1\leqslant
j\leqslant k$. Therefore,
$\Sigma_{k+1}=-k(\Sigma_k+\Sigma_{k-1})=(-1)^kk$. Let $m\in
\mathbb{N}$. One has:
\begin{eqnarray}\label{main sum1}\sum\limits_{s\in
S(m)}\sigma(s)|\{j:s(j)=j\}|^\alpha=\sum\limits_{A\subset
\{1,\ldots,m\}}\sum\limits_{Fix(s)=A}\sigma(s)|A|^\alpha.
\end{eqnarray} For any $j$ there are
$C_m^j$ subsets $A$ of cardinality $m-j$, each of which makes
contribution $\Sigma_j\cdot(m-j)^\alpha=(-1)^{j-1}(j-1)(m-j)^\alpha$
to the sum $(\ref{main sum1})$. Thus, \begin{eqnarray}\label{main
sum2}\sum\limits_{A\subset
\{1,\ldots,m\}}\sum\limits_{Fix(s)=A}\sigma(s)|A|^\alpha=
\sum\limits_{j=0}^mC_m^j(-1)^{j-1}(j-1)(m-j)^\alpha.
\end{eqnarray} From $(\ref{main sum})-(\ref{main sum2})$ for $m=2^n$ one gets:
\begin{eqnarray}\label{main
sum3}C_\alpha(m)=
\sum\limits_{j=0}^mC_m^j(-1)^{j-1}(j-1)(m-j)^\alpha\geqslant 0.
\end{eqnarray} We will show, that for any noninteger $\alpha>0$ there exist $m\in
\mathbb{N}$, such that $C_\alpha(m)<0$.

Note, that for $\alpha=n\in \mathbb{N}$ the last sum can be written
in terms of Stirling numbers of the second type:
\begin{eqnarray*}C_n(m)=m!(S(n,m)+S(n,m-1)),\\ \text{where}\;\; S(n,m)=
\frac{1}{m!}\sum\limits_{j=0}^mC_m^j(-1)^{j}(m-j)^n.
\end{eqnarray*} Remind, that $S(n,m)=0$ for $n<m$ and $S(n,m)>0$ for $n\geqslant m$.
Further, using the binomial rule, we get:
\begin{eqnarray*}C_\alpha(m)=m^\alpha\sum\limits_{j=0}^mC_m^j(-1)^{j-1}(j-1)\left(1-\frac{j}{m}\right)^\alpha=\\
m^\alpha\sum\limits_{j=0}^mC_m^j(-1)^{j-1}(j-1)\sum\limits_{k=0}^\infty
C_\alpha^k\left(-\frac{j}{m}\right)^k=\\ \sum\limits_{k=0}^\infty
(-1)^km^{\alpha-k}C_\alpha^k\sum\limits_{j=0}^mC_m^j(-1)^{j-1}(j-1)j^k.
\end{eqnarray*} Using change $r=m-j$, we get:
\begin{eqnarray*}
\sum\limits_{j=0}^mC_m^j(-1)^{j-1}(j-1)j^k=\sum\limits_{r=0}^mC_m^r(-1)^{m-r-1}(m-r-1)(m-r)^k=\\
(-1)^{m-1}\left((m-1)\sum\limits_{r=0}^mC_m^r(-1)^r(m-r)^k-\sum\limits_{r=0}^mrC_m^r(-1)^r(m-r)^k\right)=\\
m!((m-1)S(k,m)+S(k,m-1)).
\end{eqnarray*}
Finally, we get $$C_\alpha(m)=m!\sum\limits_{k={m-1}}^\infty
(-1)^{k+m-1}m^{\alpha-k}C_\alpha^k((m-1)S(k,m)+S(k,m-1)).$$ Let
$\alpha$ be noninteger. The sign of the expression
$(-1)^kC_\alpha^k$ doesn't depend on $k$ for $k>[\alpha]+1$. It
follows, that $C_\alpha(m)<0$ either for $m=[\alpha]+3$ or for
$m=[\alpha]+4$. This finishes the proof.
\end{proof}
\section{Appendix: existence of $s_i$.}
Here we prove the next combinatorial statement.
\begin{Prop}\label{append si}
Let $s\in S(2^\infty)$. Then for any $r$ there exist permutations
$s_1,\ldots,s_{2^r}\in S(2^\infty)$, such that the following is
true:
\begin{itemize}
\item[1)] elements $s_i$ are conjugate to $s$;
\item[2)] $Fix\left(s_is_j^{-1}\right)=Fix(s_i)=A$ for any $i\neq j$;
\item[3)] for any $i\neq j$ the element $s_is_j^{-1}$ has cycles of length 1 and 2 only.
\end{itemize}
\end{Prop}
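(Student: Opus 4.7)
The plan is to construct the $s_i$'s explicitly as $s_i = \tau_i s$ inside $S(2^N)$ for $N$ sufficiently large, with the $\tau_i$'s coming from a free $(\mathbb{Z}/2)^r$-action. Without loss of generality $s \in S(2^n)$ for some $n$; let $F = Fix_{X_n}(s)$. I will fix $N = n + R$ with $R \geq r$ (to be enlarged as needed) and write $X_N = X_n \times X_R$, so that the natural extension of $s$ acts by $s(x,z) = (s(x), z)$. Put $A = F \times X_R$; this will be the common fixed set. Writing $s_i = \tau_i s$, the three required properties translate into: (i) each $\tau_i$ fixes $A$ pointwise; (ii) the $\tau_i$'s pairwise commute and each non-identity $\tau_i \tau_j^{-1}$ is fixed-point-free on $X_N \setminus A$; (iii) $\tau_i s$ has the same cycle type as $s$. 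Choosing the $\tau_i$'s from a free $(\mathbb{Z}/2)^r$-action on $X_N \setminus A$ secures (i) and (ii); the nontrivial content is (iii).

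For (iii), I work block by block. For each cycle $C$ of $s$ of length $\ell$, I parameterize $\tau_t$ (for $t \in \mathbb{F}_2^R$) on the block $C \times X_R$ by
\[
\tau_t(c_k, z) = (c_k, z + v_k^{(\ell)}(t)),
\]
where $v_0^{(\ell)}, \ldots, v_{\ell-1}^{(\ell)} \in GL(R, \mathbb{F}_2)$ are linear maps to be chosen and the addition is taken in $X_R \cong \mathbb{F}_2^R$. A direct computation shows that $\tau_t \tau_u = \tau_{t+u}$ (so the $\tau_t$ generate an abelian group), that $\tau_t$ is fixed-point-free on $C \times X_R$ for $t \neq 0$ (since each $v_k^{(\ell)}$ is invertible), and that $\tau_t s$ has $2^R$ cycles of length $\ell$ on $C \times X_R$ if and only if $v_0^{(\ell)}(t) + \cdots + v_{\ell-1}^{(\ell)}(t) = 0$. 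Imposing $\sum_k v_k^{(\ell)} = 0$ as linear maps achieves this for all $t$ simultaneously. Selecting any $2^r$ distinct values $t_1, \ldots, t_{2^r} \in \mathbb{F}_2^R$ then produces the required permutations.

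The main obstacle is producing, for every cycle length $\ell$ of $s$, linear maps $v_0^{(\ell)}, \ldots, v_{\ell-1}^{(\ell)} \in GL(R, \mathbb{F}_2)$ that are all invertible and sum to zero. For $\ell$ even the choice $v_k^{(\ell)} = I$ works, since $\ell I = 0$ in characteristic $2$. For $\ell$ odd I take $v_k^{(\ell)} = f^k$ where $f \in GL(R, \mathbb{F}_2)$ has order $\ell$ and no eigenvalue $1$: then $(I - f) \sum_{k=0}^{\ell-1} f^k = I - f^\ell = 0$, and invertibility of $I - f$ forces the sum to vanish. Such an $f$ is realized as multiplication by a primitive $\ell$-th root of unity in the extension field $\mathbb{F}_{2^R}$ whenever $R$ is at least the multiplicative order of $2$ modulo $\ell$. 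Choosing $R$ large enough to accommodate every cycle length of $s$ and to satisfy $R \geq r$ then completes the construction.
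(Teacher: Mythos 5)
Your construction is correct in substance, and it takes a genuinely different route from the paper. The paper keeps the same overall architecture --- extend $s$ trivially to a larger group $S(2^{n+mr})$ and decorate the nontrivial cycles on the auxiliary coordinates --- but its decoration comes from an explicit combinatorial lemma: for each cycle length $k$ it builds two permutations $g_1,g_2$ with cycles of length $k$ and $1$ only such that $g_1g_2^{-1}$ has only even cycles (hand-made cycles on $2k-2$ or $2k-4$ points for odd $k>4$, special cases for $k$ even and $k=3$, plus writing $2^m$ as a nonnegative integer combination of $2k-4$ and $2k-2$), and then it indexes the $2^r$ permutations by $a\in\{1,2\}^r$ via coordinatewise products $g^{(k)}_a$. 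Your decoration instead twists along each $\ell$-cycle by the affine maps $z\mapsto z+v^{(\ell)}_k(t)$ on $\mathbb{F}_2^R$, with each $v^{(\ell)}_k$ invertible and $\sum_k v^{(\ell)}_k=0$; this makes the $\tau_t$ an abelian group of involutions, so $s_is_j^{-1}=\tau_{t_i+t_j}$ is an involution that is fixed-point free off $A$, and the sum-zero condition preserves the cycle type of $s$. Notably, your version delivers property 3) literally (cycles of length $1$ and $2$ only), whereas the paper's own construction produces only even cycles plus fixed points (which is what is actually used later via the corollary on even cycles), so your argument is if anything cleaner on this point. One correction is needed: a primitive $\ell$-th root of unity lies in $\mathbb{F}_{2^R}$ iff $\ell\mid 2^R-1$, i.e. iff the multiplicative order $d_\ell$ of $2$ modulo $\ell$ divides $R$, not merely $R\geqslant d_\ell$; indeed for $\ell=3$ there is no $f\in GL(3,\mathbb{F}_2)$ with $I+f+f^2=0$, since such an $f$ would turn $\mathbb{F}_2^3$ into a vector space over $\mathbb{F}_4$. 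Since $R$ is entirely at your disposal, the fix is immediate: take $R\geqslant r$ divisible by the least common multiple of the $d_\ell$ over the odd cycle lengths $\ell$ of $s$ (equivalently, build $f$ block-diagonally from copies of multiplication by $\zeta_\ell$ on $\mathbb{F}_{2^{d_\ell}}$); with that adjustment your proof is complete.
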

We will divide the proof of the last proposition into several
lemmas. For pairwise distinct numbers $k_0,\ldots,k_{l-1}$ denote
$(k_0,k_1\ldots,k_{l-1})$ the cyclic permutation, sending $k_i$ to
$k_{i\, mod\, l}$. In particular, $(k,l)$ stands for the
transposition of $k$ and $l$.
\begin{Lm}\label{append g1}
Let $k>4$ be an odd number. Then for $l\in\{2k-2,2k-4\}$ there exist
permutations $g_1,g_2\in S(l)$, such that each of $g_1,g_2$ has only
cycles of length $k$ or $1$, and $g_1g_2^{-1}$ has only even cycles.
\end{Lm}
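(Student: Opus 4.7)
The plan is to exhibit permutations $g_1, g_2$ explicitly in both cases $l = 2k-2$ and $l = 2k-4$ and verify by direct computation that $g_1 g_2^{-1}$ has only even-length cycles.

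I would begin with two structural observations. Since $2k > l$, each $g_i$, having only cycles of length $k$ or $1$, has at most one $k$-cycle; since neither $g_i$ can be the identity, each consists of exactly one $k$-cycle together with $l - k$ fixed points. For $g_1 g_2^{-1}$ to have only even-length cycles it must have no fixed points (a $1$-cycle being odd), which forces $Fix(g_1) \cap Fix(g_2) = \emptyset$. A quick count then shows that the supports of the two $k$-cycles must overlap in exactly $2k - l$ elements --- two elements when $l = 2k - 2$, four elements when $l = 2k - 4$.

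In both cases I take $g_1 = (1, 2, \ldots, k)$. For $l = 2k - 2$ I take
\[
g_2 = (k-1,\, k+1,\, k,\, k+2,\, k+3,\, \ldots,\, 2k-2),
\]
so that $Fix(g_1) = \{k+1, \ldots, 2k-2\}$ and $Fix(g_2) = \{1, \ldots, k-2\}$ are disjoint; a direct pointwise evaluation then gives
\[
g_1 g_2^{-1} = (1,\, 2,\, \ldots,\, k-1,\, 2k-2,\, 2k-3,\, \ldots,\, k+2)\,(k,\, k+1),
\]
a $(2k-4)$-cycle and a transposition, both of even length. For $l = 2k - 4$ I take the analogous interleaved construction
\[
g_2 = (1,\, 3,\, k+1,\, 2,\, 4,\, k+2,\, k+3,\, \ldots,\, 2k-4),
\]
with overlap $\{1,2,3,4\}$; the same kind of pointwise calculation produces $g_1 g_2^{-1}$ as a $(2k-8)$-cycle together with the $4$-cycle $(2,\, k+1,\, 4,\, 3)$, both even (for $k = 5$ the long factor degenerates to the transposition $(1,5)$).

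The main difficulty is finding the correct interleaving in the $l = 2k - 4$ case: naive adaptations of the $l = 2k - 2$ construction (a single shift of $g_2$, for instance) tend to produce either common fixed points of $g_1$ and $g_2$ or odd cycles in the product. The construction above is guided by the sign constraint: since $k$ is odd, $g_1$ and $g_2$ are both even permutations, so $g_1 g_2^{-1}$ lies in the alternating group and must have an even number of even-length cycles. The resulting cycle shapes $(2k-4,\, 2)$ and $(2k-8,\, 4)$ are both consistent with this, which is a useful sanity check on the construction.
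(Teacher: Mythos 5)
Your construction is correct --- I verified both cases pointwise: for $l=2k-2$ your $g_1g_2^{-1}$ has cycle type $(2k-4,2)$ and for $l=2k-4$ it has cycle type $(2k-8,4)$ (degenerating to $(2,4)$ at $k=5$), with no fixed points in either case, and your explicit cycles $(1,2,\ldots,k-1,2k-2,\ldots,k+2)(k,k+1)$ and $(2,k+1,4,3)$ are exactly right. The paper's proof has the same overall shape --- exhibit two explicit $k$-cycles with overlapping supports and compute the product --- but with different witnesses: for $l=2k-2$ it takes $g_2=(2k-2,2k-3,\ldots,k-1)$ and obtains two $(k-1)$-cycles, and for $l=2k-4$ it takes a $k$-cycle supported on $\{k-3,\ldots,2k-4\}$ and obtains two $(k-3)$-cycles. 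The substantive difference is in the $l=2k-4$ case: the paper's product fixes the two points $k-2$ and $k-1$, so its construction only establishes the lemma under the reading ``all nontrivial cycles are even,'' whereas your interleaved $g_2$ removes all fixed points and proves the statement literally, exactly as your preliminary observation ($Fix(g_1)\cap Fix(g_2)=\emptyset$ is forced) demands. This is not merely cosmetic: downstream, in property $2)$ of Proposition \ref{append si} one needs $Fix\left(s_as_b^{-1}\right)=Fix(s)$, which requires the blockwise products $g_1g_2^{-1}$ to be fixed-point free, so your version is the one actually needed there, while the paper's $l=2k-4$ witness would enlarge the fixed-point set on blocks of that size. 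The only thing missing from your write-up is the routine pointwise evaluation you allude to; it checks out.
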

\begin{proof}
Let $l=2k-2$. Put
\begin{eqnarray*}g_1=(1,2,\ldots,k)=(1,2)(2,3)\cdots
(k-1,k),\;\;\; g_2=(2k-2,2k-3,\\
\ldots,k-1)=(2k-2,2k-3)(2k-3,2k-4)\cdots(k,k-1).\end{eqnarray*} Then
$g_1$ and $g_2$ are cycles of length $k$ and \begin{eqnarray*}
g_1g_2^{-1}=(1,2)(2,3)\cdots (k-2,k-1)\times (k,k+1)(k+1,k+2)\\
\cdots (2k-3,2k-2)=(1,2,\ldots
,k-1)\times(k,k+1,\ldots,2k-2)\end{eqnarray*} is a product of two
cycles of length $k-1$.

Let $l=2k-4$. Then denote $g_1=(1,2)(2,3)\cdots(k-1,k)$ as above and
$g_2=(2k-4,2k-5)(2k-5,2k-6)\cdots
(k+1,k)\times(k-3,k-2)(k-2,k-1)(k-1,k)$. Again, $g_1$ and $g_2$ are
cycles of length $k$. One has
\begin{eqnarray*}g_1g_2^{-1}=(1,2)(2,3)\cdots (k-4,k-3)\times\\(k,k+1)(k+1,k+2)\cdots(2k-5,2k-4)\end{eqnarray*} is a
product of two cycles of length $k-3$.
\end{proof}
\begin{Co}\label{append Mk} For any $k$ there exists $M=M(k)$, such that for any
$m\geqslant M$ there exist $g_1,g_2\in S(2^m)$, satisfying the
following conditions:\\
$1)$ $g_1$ and $g_2$ have cycles of length, dividing $k$;\\
$2)$ $g_1g_2^{-1}$ has only even cycles.
\end{Co}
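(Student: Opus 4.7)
The plan is to build $g_1, g_2 \in S(2^m)$ by taking disjoint unions of copies of the two pairs produced in Lemma \ref{append g1}. First I reduce to the case $k$ odd with $k \geq 5$: if $k$ has an odd divisor $k' \geq 5$, I run the construction below with $k'$ in place of $k$ and note that cycle lengths dividing $k'$ automatically divide $k$. The residual cases (small $k$, or $k$ with no odd divisor $\geq 5$) can be handled separately by ad hoc constructions on a few points padded with fixed points; for such $k$ one may, for instance, take $g_1 = g_2$ equal to a suitable involution, so that $g_1 g_2^{-1} = e$ trivially has no nontrivial cycles at all.

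Assume now $k$ is odd and $k \geq 5$. Set $a = 2k-2$ and $b = 2k-4$, and let $(g_1^{(a)}, g_2^{(a)}) \in S(a)^2$ and $(g_1^{(b)}, g_2^{(b)}) \in S(b)^2$ be the pairs produced by Lemma \ref{append g1}. The numerical input I need is that for every sufficiently large $m$ one can write $2^m = \alpha a + \beta b$ with $\alpha, \beta \in \mathbb{Z}_{\geq 0}$. This follows from the Frobenius coin problem applied to $a/2 = k-1$ and $b/2 = k-2$: these are consecutive positive integers, hence coprime, so every sufficiently large integer is a nonnegative combination $\alpha(k-1) + \beta(k-2)$; doubling, every sufficiently large even integer is of the form $\alpha a + \beta b$, and $2^m$ is even and unbounded in $m$. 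Choose $M = M(k)$ accordingly.

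For $m \geq M$, partition the $2^m$-element set $X_m$ into $\alpha$ blocks of size $a$ and $\beta$ blocks of size $b$, and define $g_1$ to act as $g_1^{(a)}$ on each $a$-block and as $g_1^{(b)}$ on each $b$-block; define $g_2$ analogously. Since the blocks are invariant under both $g_1$ and $g_2$, each of them is a product of the blockwise pieces, whose cycles have length $k$ or $1$ only, so every cycle length divides $k$, verifying (1). The product $g_1 g_2^{-1}$ also decomposes blockwise, equalling $g_1^{(a)} (g_2^{(a)})^{-1}$ or $g_1^{(b)} (g_2^{(b)})^{-1}$ on each block; by the lemma each of these blockwise products has only even nontrivial cycles, so the same holds globally, verifying (2). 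The main obstacle is the numerical step --- ensuring that $2^m$ specifically, not merely some large integer, is representable as $\alpha a + \beta b$ --- but this is standard once one has the coprimality of $k-1$ and $k-2$; everything else is bookkeeping once Lemma \ref{append g1} is in hand.
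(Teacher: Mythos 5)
Your main case is essentially the paper's own argument: for odd $k\geqslant 5$ you tile the $2^m$ points by blocks of sizes $2k-2$ and $2k-4$ and place on them the pairs from Lemma \ref{append g1}, exactly as the paper does. Your justification of the numerical step via the coprimality of the consecutive integers $k-1$ and $k-2$ (Frobenius) is in fact cleaner than the paper's bare assertion that $2^m=(2k-4)l+(2k-2)r$ already for all $m\geqslant k$; since the corollary only asks for the existence of some $M(k)$, this is perfectly adequate. The reduction of $k$ to an odd divisor $k'\geqslant 5$ is also harmless for condition 1).

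The genuine gap is in the residual cases, and it is not a formality. First, for $k=3$ (and $k=1$) the proposed fallback $g_1=g_2=$ involution violates condition 1) outright: an involution has $2$-cycles and $2\nmid 3$. Second, and more importantly, taking $g_1=g_2$, so that $g_1g_2^{-1}=e$, does not yield an element ``with only even cycles'': the identity has only cycles of length $1$. One can defend it only by reading condition 2) as a vacuous statement about nontrivial cycles, and that reading empties the corollary of its content. The corollary exists to feed Proposition \ref{append si}, where one needs many \emph{distinct} conjugates $s_a$ of $s$ with $Fix\left(s_as_b^{-1}\right)=Fix(s_a)$; this forces $g_1\neq g_2$ and in fact requires $g_1g_2^{-1}$ to move the points of $X_m$ (with $g_1=g_2$ all the $s_a$ coincide and conditions 2), 3) of that proposition fail). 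The paper treats these cases non-degenerately: for even $k$ it takes $g_1,g_2$ to be the flips of the first and of the second coordinate of $X_m$ (all cycles of length $2\mid k$, and $g_1g_2^{-1}$ is a fixed-point-free involution), and for $k=3$ it takes the explicit pair of $3$-cycles $g_1=(1,2)(2,3)$, $g_2=(4,3)(3,2)$ in $S(2^2)$, whose quotient is the product of two disjoint transpositions. Something of this kind --- honest pairs $g_1\neq g_2$ with $g_1g_2^{-1}$ supported on all (or almost all) points --- is what you must supply for $k$ a power of $2$, $k=3\cdot 2^a$, and $k=3$; ``padding with fixed points'' is precisely what the application cannot tolerate (indeed, the two fixed points left by the $2k-4$ blocks of Lemma \ref{append g1} are already a delicate point of the paper's own construction, and your proposal makes every point fixed).
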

\begin{proof}
If $k$ is even, put $M=2$. Let $m\geqslant 2$. Consider $S(2^m)$ as
the group of permutations on $X_m$. Let $g_i$, $i=1,2$ be the
permutation, arising from the change $0\leftrightarrow 1$ on the
$i$-th coordinate of $X_m$. Obviously, $g_1,g_2$ satisfy to the
condition of the corollary.

Let $k=3$. Denote $g_1=(1,2)(2,3),g_2=(4,3)(3,2)\in S(2^2)$. For any
$m\geqslant 2$ one can consider $g_1,g_2$ as elements of $S(2^m)$,
acting on first two coordinates only. Obviously, these elements
satisfy to the conditions of the corollary.

 Let $k>4$ be odd. Put $M=k$. For any $m\geqslant k$ the number $2^m$
can be represented as $2^m=(2k-4)l+(2k-2)r$ with $r,l$ nonnegative
integer. Divide the set of $2^m$ elements onto $l$ subsets of $2k-4$
elements and $r$ subsets of $2k-2$ elements. Organize the
permutations $g_1,g_2$ on each subset using the lemma \ref{append
g1}. \\{\it Proof of the proposition \ref{append si}.} Let $s\in
S(2^n)$. For $x\in X_n$ denote $ord(x)$ the number of elements in
the trajectory of $x$. That is, $ord(x)$ is the length of the cycle
of $s$, containing $x$. Let $m$ be the maximal of $M(ord(x)),x\in
S(2^n)$ (see the corollary \ref{append Mk}). For any $x$, putting
$k=ord(x)$, let $g^{(k)}_i,i=1,2$ be the permutations from $S(2^m)$,
which satisfy to the conditions of the corollary \ref{append Mk}.
Let $r\in \mathbb{N}$. For $a=(a_1,a_2,\ldots,a_r),a_i\in
\{1,2\},k\in \{ord(x):x\in X_n\}$ denote $g_a^{(k)}\in
S(2^{mr}),s_a\in S(2^{n+mr})$ as follows:
\begin{eqnarray*}g^{(k)}_a(y_1,y_2,\ldots,y_r)=
\left(g^{(k)}_{a_1}(y_1),g^{(k)}_{a_2}(y_2),\ldots,g^{(k)}_{a_r}(y_r)\right),\\
s_a(x,y)=\left\{\begin{array}{ll}
(x,y),&\text{if }x\in Fix(s),\\
\left(s(x),g^{(ord(x))}_a(y)\right),&\text{otherwise},\end{array}\right.
\end{eqnarray*} where $x\in X_n,y_i\in X_m,y\in X_{mr}$. Note, that each
$g_a^{(k)}$ has cycles of length $k$ and $1$ only. It follows, that
$s_a$ is conjugate to $s$ for each $a$. Moreover, for any $a, b\in
\{1,2\}^r$ one has:
\begin{eqnarray*}
s_as_b^{-1}(x,y)=\left\{\begin{array}{ll}
(x,y),&\text{if }x\in Fix(s),\\
\left(x,g^{(ord(x))}_a\left(g^{(ord(x))}_b\right)^{-1}(y)\right),&\text{otherwise}.\end{array}\right.
\end{eqnarray*} Therefore, by the definition of $g_i^{(k)}$ (see the
corollary \ref{append Mk}), $Fix\left(s_as_b^{-1}\right)=Fix(s)$ and
$s_as_b^{-1}$ has only even cycles and fixed elements for $a\neq b$.
Thus, $s_a$ satisfy to the conditions of the proposition \ref{append
si}.
\end{proof}

{}

Artem Dudko, University of Toronto, artem.dudko@utoronto.ca

\end{document}